\documentclass[12pt,reqno]{amsart}
\usepackage[a4paper,margin=2.5cm,top=2.5cm,bottom=2.5cm,centering,headheight=3ex,headsep=4ex,vcentering]{geometry}
\usepackage{amssymb,amsfonts,amsmath,amsthm}
\usepackage{cite}
\usepackage{orcidlink}
\usepackage[cal=cm,scr=euler]{mathalfa}
\usepackage{microtype}
\usepackage{mlmodern}
\numberwithin{equation}{section}
\theoremstyle{plain}
\newtheorem{theorem}{Theorem}
\newtheorem{conjecture}{Conjecture}
\newtheorem{lemma}{Lemma}

\theoremstyle{definition}

\theoremstyle{remark}

\usepackage{xcolor}
\usepackage{hyperref}
\definecolor{ao(english)}{rgb}{0.0, 0.5, 0.0}
\definecolor{pink}{rgb}{1.0, 0.0, 1.0}
\hypersetup{colorlinks=true, linkcolor=ao(english),citecolor=ao(english)}

\title[Two parametric $q$-supercongruences from Andrews's Multi-series transformation]{Two parametric $q$-supercongruences from Andrews' multi-series transformation}

\author[Liton Karmakar]{Liton Karmakar\,\orcidlink{0009-0002-9532-1677}}
\address[Liton Karmakar]{Department of Mathematics, National Institute of Technology Silchar, Assam 788010, India}
\email{litonofficial8638@gmail.com}

\keywords{$q$-supercongruences; Andrews' multi-series transformation; creative microscoping method, Chinese remainder theorem for coprime polynomials.}
\subjclass[2020]{33D15·11B65·05A10}

\begin{document}

\begin{abstract}
Recently, Guo and Guo (Results Math $76:37, ~2021$) obtained curious $q$-analogues of the following two supercongruences conjectured by Guo and Schlosser \cite[Conjectures $12.6$ and $12.7$]{GuoSclssr ConstApprox 21}:
\begin{align*}
\sum_{n=0}^{(p-1)/3}
(6n+1)
\frac{\left(\frac{1}{3}\right)_n^{4}(2n)!}
     {(n!)^{4}\left(\frac{2}{3}\right)_{2n}}
\equiv p \pmod{p^{3}}
\qquad \text{for primes } p\equiv 1 \pmod{3},
\end{align*}
\begin{align*}
\sum_{n=0}^{(p+1)/3}
(6n-1)
\frac{\left(-\frac{1}{3}\right)_n^{4}(2n)!}
     {(n!)^{4}\left(-\frac{2}{3}\right)_{2n}}
\equiv p \pmod{p^{3}}
\qquad \text{for primes } p\equiv 2 \pmod{3},
\end{align*}
where $(z)_n=z(z+1)\cdots(z+n-1)$ denotes the shifted factorial. In this paper, we establish a one-parameter extension of them. The main ingredients of our proofs are the creative microscoping method developed by Guo and Zudilin \cite{guo2019q} and Andrews' multi-series transformation.
\end{abstract}

\maketitle

\section{Introduction and statement of results}

For any complex numbers $z,q$
with $|q|<1$ and a non-negative integer $M$, define the \emph{$q$-shifted factorial} to be
\begin{align*}
(z;q)_{\infty} &:= \prod_{n=0}^{\infty}(1-zq^{n}),\\
(z;q)_{M} &:= \frac{(z;q)_{\infty}}{(zq^{M};q)_{\infty}}.
\end{align*}
For brevity, we also use the abbreviated notation
\begin{align*}
  (z_1,z_2,\ldots,z_N;q)_n := (z_1;q)_n(z_2;q)_n\cdots(z_N;q)_n.  
\end{align*}
Recall that the \emph{$q$-integer} $[m]=[m]_q$ is defined as $[m] :=  \frac{1-q^{m}}{1-q}$. Furthermore, denote by $\Phi_m(q)$, the $m$-th \emph{cyclotomic polynomial}, defined as
\begin{align*}
    \Phi_m(q) := \prod_{\substack{1\leq d\leq m\\ \gcd(d,m)=1}}
\left(q-e^{2\pi i d/m}\right),
\end{align*}
with $i^2=-1$. It is clear that $\Phi_m(q)$ is an
irreducible polynomial in $q$ with integer coefficients and $\Phi_p(q)=[p]$ for any prime $p$. Here and throughout the paper, the polynomials are considered in the ring $\mathbb{Q}[q]$. For any two rational functions $A(q)$ and $B(q)$, we say that $A(q)$ and $B(q)$ in $q$ are congruent modulo a polynomial $C(q)$, written as
\begin{align*}
    A(q)\equiv B(q) \pmod{C(q)},
\end{align*}
if and only if $C(q)$ is a polynomial multiple of the numerator of the reduced form $A(q) - B(q)$.

In his famous letter to Hardy in $1913$, Ramanujan \cite{ramanujan}  mentioned $17$ infinite series representations for $1/\pi$, which take the form
\begin{center}
	$\displaystyle \sum_{n=0}^{\infty} (An + B)\dfrac{C_n}{m^n} = \dfrac{e}{\pi},$
\end{center}
where $A,B,e,m$ are constants and $C_n$ is a product of binomial coefficients. All of these formulas are related to elliptic curves with complex multiplication. In $1997$, Van Hamme \cite{van2020some} developed $p$-adic analogues for $13$ of Ramanujan's infinite series identities and conjectured a list of supercongruences relating certain truncated hypergeometric sums to values of the $p$-adic Gamma function. Since then, supercongruences for truncated hypergeometric series have received considerable attention. We begin by recalling Van Hamme\rq s celebrated $(\mathrm{B}.2)$ supercongruence:
\begin{equation} \label{padic B2}
 \sum_{n=0}^{\frac{p-1}{2}} (-1)^n (4n+1) \frac{(\frac{1}{2})_{n}^3}{(1)_{n}^3} \equiv (-1)^{\frac{p-1}{2}}p \pmod{p^3},
 \end{equation} 
where $p$ is an odd prime and $(z)_n=z(z+1)\cdots(z+n-1)$ denotes the shifted factorial. The supercongruence \eqref{padic B2} was initially confirmed by Mortenson \cite{mortenson} using a ${}_6F_5$ hypergeometric identity. Subsequently, Zudilin \cite{zudilin} reconfirmed the supercongruence \eqref{padic B2} using the WZ-method \cite{wilf}, and later, Long \cite{long1} utilized hypergeometric series identities and evaluations. Sun \cite{Sun2012} obtained an Euler number-related refinement of \eqref{padic B2} modulo $p^4$. There are a few supercongruences related to \eqref{padic B2} in \cite{GuoLiu1}.

In $2021$, Guo and Schlosser \cite[(9.1b)]{GuoSclssr ConstApprox 21} used a ${}_{12}\phi_{11}$ transformation formula to demonstrate that, for any
integer $m>1$ with $m\equiv 1\pmod{3}$,
\begin{align} \label{GS (9.1b)}
\sum_{n=0}^{(m-1)/3}
[6n+1]
\frac{(q;q^3)_n^4(q^3;q^3)_{2n}}
     {(q^3;q^3)_n^4(q^2;q^3)_{2n}}
\equiv 0 \pmod{\Phi_m(q)}.
\end{align}

They also demonstrated, as shown in \cite[(9.3b)]{GuoSclssr ConstApprox 21}, that for any positive integer
$m\equiv 2\pmod{3}$,
\begin{align} \label{GS (9.3b)}
\sum_{n=0}^{(m+1)/3}
[6n-1]
\frac{(q^{-1};q^3)_n^4(q^3;q^3)_{2n}}
     {(q^3;q^3)_n^4(q^{-2};q^3)_{2n}}
\equiv 0 \pmod{\Phi_m(q)}.
\end{align}
In the same paper, they posed the following two conjectural supercongruences related to \eqref{GS (9.1b)} and \eqref{GS (9.3b)} in the cases of $m=p$ and $q \to 1$, modulo higher powers.
\begin{conjecture} [Conjecture $12.6$, \cite{GuoSclssr ConstApprox 21}] \label{GS conj 1}
For $p\equiv 1\pmod{3}$, we have
\begin{align*}
    \sum_{n=0}^{(p-1)/3}
(6n+1)
\frac{(\frac{1}{3})_n^4(2n)!}
     {(n!)^4 (\frac{2}{3})_{2n}}
\equiv p \pmod{p^3}.
\end{align*}
\end{conjecture}

\begin{conjecture} [Conjecture $12.7$, \cite{GuoSclssr ConstApprox 21}] \label{GS conj 2}
For $p\equiv 2\pmod{3}$, we have
\begin{align*}
    \sum_{n=0}^{(p+1)/3}
(6n-1)
\frac{(-\frac{1}{3})_n^4(2n)!}
     {(n!)^4(-\frac{2}{3})_{2n}}
\equiv p \pmod{p^3}.
\end{align*}
\end{conjecture}
Recently, Jana and Kalita \cite{JK RJ22} confirmed the above two Conjectures \ref{GS conj 1} and \ref{GS conj 2}, by deriving a generalized supercongruence relation for truncated hypergeometric series. It is noteworthy that Guo and Schlosser \cite{GuoSclssr ConstApprox 21} remarked that they were unable to find complete $q$-analogues of these two conjectures. 
\par Very recently, Guo and Guo \cite{GuoGuo RM} obtained the following curious $q$-analogues of the above Conjectures \ref{GS conj 1} and \ref{GS conj 2}: for any integer $m>1$ with $m \equiv 1 \pmod{3}$,
\begin{align} \label{Curious QA 1}
    &\sum_{n=0}^{(m-1)/3}
[6n+1]
\frac{(q;q^3)_n^4(q^3;q^3)_{2n}}
     {(q^3;q^3)_n^4(q^2;q^3)_{2n}} \notag \\
&\quad\equiv
(-q)^{1-m}[m]_{q^2}
\sum_{n=0}^{(m-1)/3}
\frac{
(1+q)(-q^3;q^3)_n
(q^{1+m};q^3)_n(q^{1-m};q^3)_n
}{
(1+q^{3n+1})(q^5;q^6)_n(q^3;q^3)_n
}
q^{3n}
\pmod{\Phi_m(q)^3},
\end{align}
and, for any positive integer $m \equiv 2 \pmod{3}$,
\begin{align} \label{Curious QA 2}
    &\sum_{n=0}^{(m+1)/3}
[6n-1]
\frac{(q^{-1};q^3)_n^4(q^3;q^3)_{2n}}
     {(q^3;q^3)_n^4(q^{-2};q^3)_{2n}} \notag \\
&\quad\equiv
(-q)^{-1-m}[m]_{q^2}
\sum_{n=0}^{(m+1)/3}
\frac{
(1+q)(-q^3;q^3)_n
(q^{-1+m};q^3)_n(q^{-1-m};q^3)_n
}{
(1+q^{3n-1})(q;q^6)_n(q^3;q^3)_n
}
q^{3n}
\pmod{\Phi_m(q)^3},
\end{align}
which are extensions of the $q$-congruences \eqref{GS (9.1b)} and \eqref{GS (9.3b)}. 
\par In $2022,$ Guo \cite{guo2022new} extended the Van Hamme\rq s $\mathrm{(A.2)}$ supercongruence with an additional parameter $s$ by providing a $q$-analogue with the help of creative microscoping method designed by Guo and Zudilin \cite{guo2019q}. Over the past few years, parametric extensions of Ramanujan-type supercongruences have attracted many researchers and have led to numerous related results. For example, see \cite{Guo25, Guo-qB2C2, Liton2025 Ramanujan, KJ 2026 RJ, P2 Liton, Tang G2, Tang2023 JMAA, Wang2024 Results Math, Wei2024 RACSAM}.

\par In this paper, we present a parametric generalization of the $q$-congruences \eqref{Curious QA 1} and \eqref{Curious QA 2} involving the additional parameter $s$. Our main results are stated in the following theorems.

\begin{theorem} \label{Main Thm 1}
    Let $m>1$ be an integer satisfying $m \equiv 1 \pmod{3}$, and $s\le (m-1)/3$ a non-negative integer. Then
\begin{align} \label{Main Thm 1 eqn}
&\sum_{n=s}^{(m-1)/3}[6n+1] \frac{(q;q^3)_{n-s}(q;q^3)_{n+s} (q;q^3)_n^2 (q^3;q^3)_{2n}}{(q^3;q^3)_{n-s}(q^3;q^3)_{n+s} (q^3;q^3)_n^2 (q^2;q^3)_{2n}} \notag \\
&\quad \equiv (-1)^{\frac{m-1}{3}-s} q^{1-m(s+1) + 3s^2 + 2s} [m+3s] \frac{1+q^m}{1+q^{3s+1}} \frac{(q^m,q^{m+1},q^{m+2-3s};q^3)_s}{(q^{m+3};q^3)_s (q^2;q^3)_{2s}} \notag \\
&\quad \quad \times \sum_{k=0}^{(m-1)/3-s} \sum_{j=0}^{(m-1)/3-s-k} \left\{ \frac{(q^{3s};q^3)_k (-q;q^3)_j (q^{6s+3};q^6)_k}{(q^3;q^3)_k (q^3;q^3)_j (q^{6s+3},q^{3s+1};q^3)_k} \right. \notag \\ & \qquad \qquad \qquad \qquad \qquad  \times \left. \frac{(-q^{3s+3},q^{3s+1+m}, q^{3s+1-m};q^3)_{k+j}}{(q^{6s+5};q^6)_{k+j} (-q^{3s+4}; q^3)_{k+j}}  \right\} (-q)^k q^{3(k+j)} \pmod{\Phi_m(q)^3}.
\end{align}
\end{theorem}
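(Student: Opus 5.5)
We will prove Theorem \ref{Main Thm 1} by creative microscoping, in the style of Guo and Zudilin \cite{guo2019q} and Guo and Guo \cite{GuoGuo RM}. Introduce a parameter $a$ and consider
\begin{align*}
\Sigma(a):=\sum_{n=s}^{(m-1)/3}[6n+1]\frac{(q;q^3)_{n-s}(q;q^3)_{n+s}(aq,q/a;q^3)_n(q^3;q^3)_{2n}}{(q^3;q^3)_{n-s}(q^3;q^3)_{n+s}(aq^3,q^3/a;q^3)_n(q^2;q^3)_{2n}}.
\end{align*}
At $a=1$ this is the left-hand side of \eqref{Main Thm 1 eqn}. The right-hand side gets the same treatment: replace $q^{3s+1\pm m}$ by $aq^{3s+1}$ and $q^{3s+1}/a$, and make the matching replacements $q^m\mapsto a$ in the prefactor. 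The aim is to prove the parametric congruence modulo
\[
\Phi_m(q)(1-aq^m)(a-q^m).
\]
Setting $a=1$ then gives modulus $\Phi_m(q)^3$, because $1-q^m$ and $1-q^m$ each contribute a factor $\Phi_m(q)$ and the parametric prefactor is regular at $a=1$.

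\textbf{Step 1: the case $a=q^{\pm m}$.} Set $a=q^{-m}$; by the symmetry $a\leftrightarrow 1/a$ this also covers $a=q^m$. The factor $(q^{1-m};q^3)_n$ then terminates the sum at $n=(m-1)/3$. Shift $n\mapsto n+s$ and write the sum as a terminating, very-well-poised ${}_{12}\phi_{11}$-type series in base $q^3$ with parameters $q^{6s+1}$, $q^{3s+1}$, $q^{1+3s}\cdot q^{\pm m}$, $q^{3s+1}$, and so on. The pieces $(q^3;q^3)_{2n}/(q^2;q^3)_{2n}$ are split into base-$q^3$ factors $\pm q^{3/2}$ and $\pm q$ at scale $q^{6}$; this is the source of the $(q^{6s+3};q^6)$, $(q^{6s+5};q^6)$ and $(-q^{3s+3};q^3)$ factors in the statement.

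Apply Andrews' multi-series transformation, the multiple extension of Watson's ${}_8\phi_7$ transformation, with two inner sums. This turns the series into a prefactor times a double sum over $k$ and $j$. The prefactor comes out of the usual product $(aq,aq/bc,\dots)_N/(aq/b,\dots)_N$ with $N=(m-1)/3-s$, and after simplifying with $q^{3N}=q^{m-1-3s}$ it should match
\[
[m+3s]\frac{1+q^m}{1+q^{3s+1}}\frac{(q^m,q^{m+1},q^{m+2-3s};q^3)_s}{\cdots},
\]
including the sign $(-1)^{N}$ and the power of $q$.

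\textbf{Step 2: modulo $\Phi_m(q)$.} Show that both sides are congruent modulo $\Phi_m(q)$ independently of $a$. On the right, the prefactor contains $[m+3s]$, and $(q^m;q^3)_s$ vanishes modulo $\Phi_m$ when $s\ge1$. For $s=0$ the factor $[m]$ vanishes, so the right side is $\equiv0$. On the left, pair the term $n$ with the term $(m-1)/3+2s-n$, in the manner of Guo and Schlosser's proof of \eqref{GS (9.1b)}, and show that the paired terms cancel modulo $\Phi_m(q)$. Denominators $(q^3;q^3)_n$, $(q^2;q^3)_{2n}$ and $(aq^3;q^3)_n$ are coprime to $\Phi_m$ in the range considered, apart from the controlled factors.

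\textbf{Step 3: combine.} The polynomials $\Phi_m(q)$, $1-aq^m$ and $a-q^m$ are pairwise coprime in $\mathbb{Q}(q)[a]$. The Chinese remainder theorem for coprime polynomials therefore gives the parametric congruence, and $a\to1$ finishes the proof.

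The main obstacle is Step 1. The work is in choosing the specialization of the parameters in Andrews' transformation so that the double sum appears exactly in the stated form, with $(q^{3s};q^3)_k$, $(-q;q^3)_j$, the $(-q)^k$ weight and the $(q^{6s+5};q^6)_{k+j}$ denominators, and then in verifying the prefactor. I would first try the $(k,j)$ specialization used by Guo and Guo for $s=0$, where the $k$-sum collapses because $(q^{0};q^3)_k$ vanishes for $k\ge1$. I would then insert $b=q^{1-3s}$-type parameters to produce $(q^{3s};q^3)_k$.
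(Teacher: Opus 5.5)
Your skeleton is the paper's: a parameter $a$ on the left, evaluation at $a=q^{\pm m}$ through Andrews' transformation with $h=3$, vanishing modulo $\Phi_m(q)$, the Chinese remainder theorem, then $a=1$. However, parametrizing the right-hand side breaks it.

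In the paper's parametric version, Theorem~\ref{Theorem 3}, the right-hand side is \emph{exactly} the $a$-free expression of Theorem~\ref{Main Thm 1}. The congruence modulo $1-aq^m$ (resp.\ $a-q^m$) then just says that the left side at $a=q^{-m}$ (resp.\ $a=q^m$) equals this fixed function of $q$. The symmetry $a\leftrightarrow 1/a$ of the left side handles both roots at once.

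With your replacement $q^m\mapsto a$, the prefactor contains $(a;q^3)_s$ for $s\ge1$, and $(1-a^2)/(1-q^2)$ for $s=0$. This causes three failures:
\begin{itemize}
\item The prefactor is no longer divisible by $\Phi_m(q)$. Your Step~2 really uses $(q^m;q^3)_s$ and $[m]$, i.e.\ the unparametrized side, so it does not apply.
\item The prefactor is not invariant under $a\leftrightarrow1/a$. Already for $s=0$, $q^{1-m}[m]_{q^2}$ changes sign under $q^m\mapsto q^{-m}$. So your right side disagrees with the left side at one of the two roots.
\item It vanishes identically at $a=1$. Specializing would then claim that the left side is $\equiv0\pmod{\Phi_m(q)^3}$, which is not the theorem. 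It is also false already for $s=0$, where the $q\to1$, $m=p$ limit is $\equiv p\pmod{p^3}$.
\end{itemize}
Also, do the Chinese remainder step in $\mathbb{Q}[q,a]$, not in $\mathbb{Q}(q)[a]$, where $\Phi_m(q)$ is a unit.

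Once the right side is kept $a$-free, two more corrections are needed.

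In Step~2, the involution $n\mapsto (m-1)/3+2s-n$ is not a symmetry of the summand for $s\ge1$; it even sends $n=s$ outside the range. With $J=(m-1)/3$, the correct pairing is $n\mapsto J-n$ on $s\le n\le J-s$. There, Lemma~\ref{Lemma 1} with $d=3$, $\ell=1$, together with its consequences for $(aq;q^3)_{J-n\mp s}/(q^3/a;q^3)_{J-n\pm s}$, shows that the $(J-n)$-th term is $\equiv$ minus the $n$-th. The terms with $J-s<n\le J$ vanish individually because $(q;q^3)_{n+s}$ contains $1-q^m$.

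In Step~1, which you leave open, no auxiliary $b=q^{1-3s}$ is needed. After $n\mapsto n+s$ the sum is a terminating ${}_{10}\phi_9$ (not ${}_{12}\phi_{11}$). The specialization is $q\to q^3$, $a=q^{6s+1}$, $e_1=q$, $f_1=q^{3s+3}$, $e_2=-f_2=q^{3s+3/2}$, $e_3=-q^{3s+3}$, $f_3=q^{3s+1+m}$, $M=(m-1)/3-s$. Then:
\begin{itemize}
\item $aq^3/e_1f_1=q^{3s}$ and $aq^3/e_2f_2=-q$ give $(q^{3s};q^3)_k$ and $(-q;q^3)_j$;
\item $e_3f_3q^{-3M}/a=-q^{3s+4}$;
\item the weight is $(-q)^kq^{3(k+j)}$;
\item the factor $(q^{6s+4},-q^{-m};q^3)_M/(-q^{3s+1},q^{3s+3-m};q^3)_M$, times the constant produced by the shift, simplifies to the stated prefactor.
\end{itemize}
With these changes your outline becomes the paper's proof.
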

Clearly, the $s=0$ case of \eqref{Main Thm 1 eqn} is the identity of Guo and Guo\rq s $q$-congruence \eqref{Curious QA 1}.

\begin{theorem} \label{Main Thm 2}
    Let $m$ be a positive integer satisfying $m \equiv 2 \pmod{3}$, and $s\le (m-3)/3$ a non-negative integer. Then
\begin{align} \label{Main Thm 2 eqn}
&\sum_{n=s}^{(m+1)/3}[6n-1] \frac{(q^{-1};q^3)_{n-s}(q^{-1};q^3)_{n+s} (q^{-1};q^3)_n^2 (q^3;q^3)_{2n}}{(q^3;q^3)_{n-s}(q^3;q^3)_{n+s} (q^3;q^3)_n^2 (q^{-2};q^3)_{2n}} \notag \\
&\quad \equiv (-1)^{\frac{m+1}{3}-s} q^{-1-m(s+1) + 3s^2 - 2s} [m+3s] \frac{1+q^m}{1+q^{3s-1}} \frac{(q^m,q^{m-1},q^{m+4-3s};q^3)_s}{(q^{m+3};q^3)_s (q^{-2};q^3)_{2s}} \notag \\
&\quad \quad \times \sum_{k=0}^{(m+1)/3-s} \sum_{j=0}^{(m+1)/3-s-k} \left\{ \frac{(q^{3s};q^3)_k (-q^{-1};q^3)_j (q^{6s+3};q^6)_k}{(q^3;q^3)_k (q^3;q^3)_j (q^{6s+3},q^{3s-1};q^3)_k} \right. \notag \\ & \qquad \qquad \qquad \qquad \qquad  \times \left. \frac{(-q^{3s+3},q^{3s-1+m}, q^{3s-1-m};q^3)_{k+j}}{(q^{6s+1};q^6)_{k+j} (-q^{3s+2}; q^3)_{k+j}}  \right\} (-q^{-1})^k q^{3(k+j)} \pmod{\Phi_m(q)^3}.
\end{align}
\end{theorem}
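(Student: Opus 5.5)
We prove Theorem~\ref{Main Thm 2} by the creative microscoping method, in parallel with the approach for Theorem~\ref{Main Thm 1}. The plan is to introduce a parameter $a$ and replace the numerator factors $(q^{-1};q^3)_n^2$ by $(aq^{-1},q^{-1}/a;q^3)_n$, and the matching denominators $(q^3;q^3)_n^2$ by $(aq^3,q^3/a;q^3)_n$. Write $F_a(q)$ for the resulting left-hand side and $G_a(q)$ for the correspondingly deformed right-hand side. In $G_a(q)$, the factors $(q^{3s-1+m},q^{3s-1-m};q^3)_{k+j}$ become $(aq^{3s-1},q^{3s-1}/a;q^3)_{k+j}$, and the prefactor $(q^m,q^{m-1},q^{m+4-3s};q^3)_s/(q^{m+3};q^3)_s$ is deformed in the same way (with $q^m\mapsto a$). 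We then aim to show $F_a(q)\equiv G_a(q)$ modulo $\Phi_m(q)(1-aq^m)(a-q^m)$, and finally set $a=1$.

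The key identity comes from Andrews' multi-series transformation, which expresses a terminating very-well-poised basic hypergeometric series as a multiple sum. Take base $q^3$, with $q^{-2}$ playing the role of the very-well-poised parameter. The numerator parameters will be $q^{-1}$ (giving $[6n-1]$), $q^{-1-3s}$ and $q^{-1+3s}$ (after shifting the summation index by $s$), $aq^{-1}$ and $q^{-1}/a$, and the $(q^3;q^3)_{2n}/(q^{-2};q^3)_{2n}$ factor split into parameters such as $\pm q^{1/2}$ or $-q^{3/2}$ type. These should be arranged so that the sum has exactly the shape $\sum[6n-1]\cdots$.

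At $a=q^{-m}$ or $a=q^m$, one factor $(q^{-1-m};q^3)_n$ terminates the sum at $n=(m+1)/3$. Andrews' formula, specialised as in Guo and Guo \cite{GuoGuo RM} for the case $s=0$, will then give the double sum over $k,j$ with the stated prefactor. The shift $n\mapsto n+s$ produces the factors $(q^{3s};q^3)_k$, $(q^{6s+3};q^6)_k$ and the Pochhammer ratios in the prefactor. This yields the congruence modulo $(1-aq^m)(a-q^m)$, because both sides are symmetric in $a\leftrightarrow 1/a$.

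Separately, we prove $F_a(q)\equiv 0\equiv G_a(q)\pmod{\Phi_m(q)}$. For $F_a$, we pair the terms $n$ and $(m+1)/3+2s-n$ (or argue as for \eqref{GS (9.3b)}) and check that the summands cancel modulo $\Phi_m(q)$. The condition $s\le (m-3)/3$ ensures the denominators are coprime to $\Phi_m$. For $G_a$, the factor $[m+3s]$ together with $1-q^m$ inside $(q^m;q^3)_s$ should give divisibility by $\Phi_m(q)$; this needs care when $s=0$, where $[m]$ itself supplies it. Since $\Phi_m(q)$, $1-aq^m$ and $a-q^m$ are pairwise coprime in $\mathbb{Q}(a)[q]$, the Chinese remainder theorem then yields $F_a\equiv G_a$ modulo their product. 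Setting $a=1$ turns $(1-aq^m)(a-q^m)$ into $(1-q^m)^2$, which is divisible by $\Phi_m(q)^2$. Denominators with $1-a$ factors do not contain $\Phi_m$, so this gives the modulus $\Phi_m(q)^3$.

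The main obstacle is matching Andrews' multi-series transformation term-by-term to the stated double sum. This means choosing the correct specialisation of its parameters (including $-q^{3s+3}$, $-q^{-1}$ and $(q^{6s+1};q^6)$) so that the prefactor $q^{-1-m(s+1)+3s^2-2s}(1+q^m)/(1+q^{3s-1})$ emerges exactly. I would first redo Guo--Guo's $s=0$ derivation with $q^{-1}$ replaced by $q^{-1\pm 3s}$ in two of the slots, then simplify the resulting shifted Pochhammers via $(x;q)_{n+s}=(x;q)_s(xq^s;q)_n$.
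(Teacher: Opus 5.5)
There is a genuine gap, and it is in your deformed right-hand side $G_a$. Replacing $q^m$ by $a$ in $(q^m,q^{m-1},q^{m+4-3s};q^3)_s/(q^{m+3};q^3)_s$ and in the $(k+j)$-factors breaks the argument in three places.

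(i) $G_a$ is no longer invariant under $a\mapsto a^{-1}$. Already for $s=1$ the factor $(1-a)(1-aq^{-1})(1-aq)/(1-aq^3)$ is not. So knowing $F_{q^m}=G_{q^m}$ from Andrews' formula does not give $F_{q^{-m}}=G_{q^{-m}}$, and the congruence modulo $1-aq^m$ is lost.

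(ii) For $s\ge1$, the $\Phi_m(q)$-divisibility you invoke comes from the factor $1-q^m$ in $(q^m;q^3)_s$. Your deformation has turned that factor into $(a;q^3)_s$, and $[m+3s]$ is coprime to $\Phi_m(q)$ because $m\nmid 3s$. So that step no longer applies.

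(iii) Decisively, $G_1$ is not the right-hand side of the theorem. For $s\ge1$ it vanishes identically, since $(1;q^3)_s=0$. For $s=0$ the factors $(q^{-1+m},q^{-1-m};q^3)_{k+j}$ have become $(q^{-1};q^3)_{k+j}^2$. Setting $a=1$ therefore cannot produce the stated congruence.

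The remedy, which is what the paper does, is to leave the right-hand side exactly as stated, independent of $a$. It already contains $q^m$ and is literally the output of Andrews' transformation. The congruence modulo $(1-aq^m)(a-q^m)$ then needs only two facts:
\begin{itemize}
\item $F_{q^{-m}}=F_{q^m}$, by the $a\leftrightarrow a^{-1}$ symmetry of $F_a$;
\item the evaluation of $F_{q^m}$ via \eqref{Andrews multi-series}, after the shift $n\mapsto n+s$, with $h=3$, $q\to q^3$, $a=q^{6s-1}$, $e_1=q^{-1}$, $f_1=q^{3s+3}$, $e_2=-f_2=q^{3s+3/2}$, $e_3=-q^{3s+3}$, $f_3=q^{3s-1+m}$, $M=(m+1)/3-s$.
\end{itemize}
So the very-well-poised parameter is $q^{6s-1}$, not $q^{-2}$. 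After the shift, $(q^{-1};q^3)_{n-s}(q^{-1};q^3)_{n+s}$ becomes $(q^{-1};q^3)_n(q^{-1};q^3)_{2s}(q^{6s-1};q^3)_n$, so no parameters $q^{-1\pm 3s}$ occur.

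Your pairing for $F_a\equiv0\pmod{\Phi_m(q)}$ also needs correcting: $n\mapsto(m+1)/3+2s-n$ sends $n=s$ outside the summation range. The correct pairing is $n\leftrightarrow(m+1)/3-n$ for $s\le n\le(m+1)/3-s$, using Guo--Schlosser's lemma with $d=3$, $\ell=-1$, $J=(m+1)/3$. The terms with $n>(m+1)/3-s$ vanish individually because $(q^{-1};q^3)_{n+s}$ contains $1-q^m$.

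With the undeformed right-hand side, its divisibility by $\Phi_m(q)$ comes from $(q^m;q^3)_s$ when $s\ge1$ and from $[m]$ when $s=0$. The CRT step and the specialization $a=1$ then go through as you describe.
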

When $s=0$, the $q$-congruence \eqref{Main Thm 2 eqn} is the identity in \eqref{Curious QA 2}.

\par The rest of the paper is arranged as follows. In Section \ref{Sec2}, we first recall Andrews' multi-series generalization of Watson's $_{8}\phi_7$ transformation formula, and, secondly, we deduce two auxiliary lemmas required for the proofs of our main results. The next two Sections \ref{Sec3} and \ref{Sec4} are devoted to the proofs of Theorems \ref{Main Thm 1} and \ref{Main Thm 2}. Our proofs use the creative microscoping method, recently designed by Guo and Zudilin \cite{guo2019q}, together with Andrews' multi-series transformation.

\section{Preliminaries} \label{Sec2}
In this section, first, we recall the definition of basic hypergeometric series, whose summation and transformation formulas are widely utilized in the proofs of $q$-congruences. Following Gasper and Rahman \cite{gasper}, the basic hypergeometric ${}_{r}\phi_s$ series with $r$ upper parameters $x_1, \ldots, x_r$, $s$ lower parameters $y_1, \ldots, y_s$, base $q$, and argument $z$ are defined by
\begin{align*}
  {}_{r}\phi_{s}
\left[
\begin{matrix}
x_1, x_2, \ldots, x_r \\
y_1, \ldots, y_s
\end{matrix}
; q, z
\right]
:= \sum_{n=0}^{\infty}
\frac{(x_1, x_2, \ldots, x_r; q)_n}
{(q, y_1, \ldots, y_s; q)_n}
\left[(-1)^n q^{\binom{n}{2}}\right]^{1+s-r}
z^n.  
\end{align*}
If one of the upper parameters, $x_r$, is of the form $q^{-m}$, where $m$ is a non-negative integer, then such a series terminates. The series converges for $|z| < 1$ if it does not terminate.

In this paper, we will utilize a remarkable transformation formula attributed to Andrews \cite[Theorem $4$]{AndrewsM}, which can be expressed as follows:

\begin{align} \label{Andrews multi-series}
&\sum_{n\ge 0}
\frac{
(a,q\sqrt{a},-q\sqrt{a},e_1,f_1,\ldots,e_h,f_h,q^{-M};q)_n
}{
(q,\sqrt{a},-\sqrt{a},aq/e_1,aq/f_1,\ldots,
aq/e_h,aq/f_h,aq^{M+1};q)_n
}
\left(
\frac{a^h q^{h+M}}{e_1f_1\cdots e_h f_h}
\right)^n \notag \\
&\quad =
\frac{(aq,aq/e_h f_h;q)_M}
     {(aq/e_h,aq/f_h;q)_M}
\sum_{j_1,\ldots,j_{h-1}\ge 0}
\frac{
(aq/e_1 f_1;q)_{j_1}\cdots
(aq/e_{h-1}f_{h-1};q)_{j_{h-1}}
}{
(q;q)_{j_1}\cdots(q;q)_{j_{h-1}}
}
\notag \\
&\qquad \times
\frac{
(e_2,f_2;q)_{j_1}\cdots
(e_h,f_h;q)_{j_1+\cdots+j_{h-1}}
}{
(aq/e_1,aq/f_1;q)_{j_1}\cdots
(aq/e_{h-1},aq/f_{h-1};q)_{j_1+\cdots+j_{h-1}}
}
\notag \\
&\qquad \times
\frac{
(q^{-M};q)_{j_1+\cdots+j_{h-1}}
}{
(e_h f_h q^{-M}/a;q)_{j_1+\cdots+j_{h-1}}
}
\frac{
(aq)^{j_{h-2}+\cdots+(h-2)j_1}
q^{j_1+\cdots+j_{h-1}}
}{
(e_2 f_2)^{j_1}\cdots
(e_{h-1} f_{h-1})^{j_1+\cdots+j_{h-2}}
}.
\end{align}
This transformation can be seen as a multi-series generalization of Watson's ${}_{8}\phi_7$ transformation formula (see \cite[Appendix (III.18)]{gasper}), which we present below in standard notation for basic hypergeometric series \cite[Section $1$]{gasper} :

\begin{align*} 
&{}_8\phi_7\left[
\begin{matrix}
a, & qa^{\frac{1}{2}}, & -qa^{\frac{1}{2}}, & b, & c, & d, & e, & q^{-M} \\
& a^{\frac{1}{2}}, & -a^{\frac{1}{2}}, & aq/b, & aq/c, & aq/d, & aq/e, & aq^{M+1}
\end{matrix}
~~; q, \dfrac{a^2q^{M+2}}{bcde}
\right] \notag \\
&\qquad =
\frac{(aq,aq/de;q)_M}{(aq/d,aq/e;q)_M}{}_4\phi_3\!\left[
\begin{matrix}
aq/bc, & d, & e, & q^{-M}\\
& aq/b, & aq/c, & deq^{-M}/a
\end{matrix}
;\,q,\;q
\right].
\end{align*}

We now state a crucial lemma due to Guo and Schlosser.
\begin{lemma} [Lemma $2.1$, \cite{GuoSchlsrLemma RM}]  \label{Lemma 1}
    Let $d$, $J$ and $m$ be positive integers with $J\le m-1$, and $\ell $ an integer satisfying
$dJ\equiv -\ell \pmod m$. Then, for $0\le n\le J$, we have
\begin{align} \label{lemma 1 Eqn}
    \frac{(aq^{\ell};q^d)_{J-n}}{(q^d/a;q^d)_{J-n}}
\equiv
(-a)^{J-2n} q^{J(dJ-d+2 \ell)/2+(d-\ell)n}
\frac{(aq^{\ell};q^d)_n}{(q^d/a;q^d)_n}
\pmod{\Phi_m(q)}.
\end{align}
\end{lemma}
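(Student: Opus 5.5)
The plan is to work directly with the definition of the $q$-shifted factorials. The key input is that modulo $\Phi_m(q)$ we have $q^m\equiv 1$. Together with the hypothesis $dJ\equiv-\ell\pmod m$ this gives $q^{dJ}\equiv q^{-\ell}\pmod{\Phi_m(q)}$. Throughout, $a$ is treated as an indeterminate. Then factors of the form $1-aq^{r}$ and $1-q^{r}/a$ are units in $\mathbb{Q}(a)[q]/(\Phi_m(q))$, which is a field since $\Phi_m$ is irreducible, so we may freely divide by them.

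\emph{Step 1 (reflect the numerator).} For $0\le k\le J-n-1$, write
\[
1-aq^{\ell+dk}=-aq^{\ell+dk}\bigl(1-q^{-\ell-dk}/a\bigr)\equiv -aq^{\ell+dk}\bigl(1-q^{d(J-k)}/a\bigr)\pmod{\Phi_m(q)}.
\]
As $k$ runs from $0$ to $J-n-1$, the index $J-k$ runs over $n+1,\dots,J$. Multiplying these factors gives
\[
(aq^{\ell};q^d)_{J-n}\equiv(-a)^{J-n}q^{(J-n)\ell+d\binom{J-n}{2}}\frac{(q^d/a;q^d)_J}{(q^d/a;q^d)_n}\pmod{\Phi_m(q)}.
\]

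\emph{Step 2 (reflect the denominator).} Similarly, $1-q^{d(k+1)}/a=-\frac{q^{d(k+1)}}{a}\bigl(1-aq^{-d(k+1)}\bigr)$. Using $q^{-d(k+1)}\equiv q^{\ell+d(J-1-k)}$, this is congruent to $-\frac{q^{d(k+1)}}{a}\bigl(1-aq^{\ell+d(J-1-k)}\bigr)$. As $k$ runs from $0$ to $J-n-1$, the index $J-1-k$ runs over $n,\dots,J-1$. Hence
\[
(q^d/a;q^d)_{J-n}\equiv(-a)^{-(J-n)}q^{d\binom{J-n+1}{2}}\frac{(aq^{\ell};q^d)_J}{(aq^{\ell};q^d)_n}\pmod{\Phi_m(q)}.
\]

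\emph{Step 3 (combine).} Dividing the result of Step 1 by that of Step 2 gives
\[
\frac{(aq^\ell;q^d)_{J-n}}{(q^d/a;q^d)_{J-n}}\equiv(-a)^{2(J-n)}q^{(J-n)\ell-d(J-n)}\,\frac{(q^d/a;q^d)_J}{(aq^\ell;q^d)_J}\cdot\frac{(aq^\ell;q^d)_n}{(q^d/a;q^d)_n}.
\]
The remaining constant ratio is evaluated by the $n=0$ case of Step 1:
\[
(q^d/a;q^d)_J/(aq^\ell;q^d)_J\equiv(-a)^{-J}q^{-J\ell-d\binom J2}.
\]
Substituting this in, the power of $-a$ becomes $J-2n$, as required. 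Collecting the powers of $q$, the exponent is
\[
(J-n)\ell-d(J-n)-J\ell-d\binom J2 = -n\ell+dn-dJ-d\binom J2 .
\]

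The only delicate point is this exponent bookkeeping, since the exponent is determined only modulo $m$. Here one must use $q^{dJ}\equiv q^{-\ell}$ once more to rewrite $-dJ-d\binom J2$ as $J(dJ-d+2\ell)/2$. Indeed, their difference is
\[
-dJ^2-J\ell = -J(dJ+\ell)\equiv 0\pmod m.
\]
This yields the stated exponent $J(dJ-d+2\ell)/2+(d-\ell)n$ and completes the argument.
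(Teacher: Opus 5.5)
Your proof is correct. The two reflections $1-aq^{\ell+dk}\equiv -aq^{\ell+dk}\bigl(1-q^{d(J-k)}/a\bigr)$ and $1-q^{d(k+1)}/a\equiv -\frac{q^{d(k+1)}}{a}\bigl(1-aq^{\ell+d(J-1-k)}\bigr)$ are valid modulo $\Phi_m(q)$, the index ranges and exponent bookkeeping are right, and the leftover discrepancy $-J(dJ+\ell)$ is a multiple of $m$.

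The paper does not prove this lemma; it imports it from Guo--Schlosser. Your factor-by-factor reflection using $q^{dJ}\equiv q^{-\ell}\pmod{\Phi_m(q)}$ is the standard way such statements are established. Note that, with $a$ generic, your argument never uses the hypothesis $J\le m-1$, and that is harmless.
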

With the help of the above Lemma \ref{Lemma 1}, we deduce the following two $q$-congruences modulo $\Phi_m(q)$, which play an important role in our proofs.
\begin{lemma} \label{lemma 2}
    Let $m>1$ be an integer satisfying $m \equiv 1 \pmod{3}$, and $s\le (m-1)/3$ a non-negative integer. Then
\begin{align} \label{lemma 2 eqn}
&\sum_{n=s}^{(m-1)/3}[6n+1] \frac{(q;q^3)_{n-s}(q;q^3)_{n+s} (aq;q^3)_n (q/a;q^3)_n (q^3;q^3)_{2n}}{(q^3;q^3)_{n-s}(q^3;q^3)_{n+s} (aq^3;q^3)_n (q^3/a;q^3)_n (q^2;q^3)_{2n}} \equiv 0 \pmod{\Phi_m(q)}.
\end{align}
\end{lemma}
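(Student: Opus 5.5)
Writing $N=(m-1)/3$ (so $3N = m-1$), we plan to prove \eqref{lemma 2 eqn} by pairing the summand indexed by $n$ with the one indexed by $N+s-n$. The reflection $n\mapsto N+s-n$ maps $\{s,\dots,N\}$ onto itself, and we aim to show that, modulo $\Phi_m(q)$, the two paired terms are negatives of each other. The sum then vanishes; when $N+s$ is even, the middle term $n=(N+s)/2$ is its own partner and must therefore be $0$ modulo $\Phi_m(q)$. Write $c_q(n)$ for the summand without the factor $[6n+1]$.

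\textbf{The factor $[6n+1]$.} Since $q^m\equiv 1 \pmod{\Phi_m(q)}$ and $6(N+s-n)+1 = 2m-1+6s-6n$, we get
\[
[6(N+s-n)+1]\equiv [6s-6n-1] = -q^{6s-6n-1}[6n-6s+1] \pmod{\Phi_m(q)}.
\]
This produces the needed sign, but the index $6n+1$ has been turned into $6n-6s+1$, together with a power of $q$. Reconciling this with $[6n+1]$ is a real issue, and it is addressed in the last step.

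\textbf{The quotients.} We apply Lemma \ref{Lemma 1} with $d=3$, $J=N$, $\ell=1$; this is allowed since $3N=m-1\equiv -1 \pmod m$. It gives
\[
\frac{(aq;q^3)_{N-n}}{(q^3/a;q^3)_{N-n}}\equiv (-a)^{N-2n}q^{N(3N-1)/2+2n}\frac{(aq;q^3)_n}{(q^3/a;q^3)_n}.
\]
We use it in three ways:
\begin{itemize}
\item with $a$ replaced by $1/a$, for the quotient $(q/a;q^3)_{\cdot}/(q^3/a;q^3)_{\cdot}$ paired in the same way;
\item with $a=1$, for $(q;q^3)_{n\pm s}/(q^3;q^3)_{n\pm s}$, where $N+s-n-s = N-n$ and $N+s-n+s = N-(n-2s)$;
\item for $(q^3;q^3)_{2n}/(q^2;q^3)_{2n}$, after checking that $(q^2;q^3)_{2n}$ has no factor $\Phi_m(q)$ for $2n\le 2N$ (it contains $1-q^{m+1}$, which is coprime to $\Phi_m(q)$). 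Here we use the analogous reflection $2n\mapsto 2N+2s-2n$ relative to $2N$; writing $(q^2;q^3)_{2N}$ modulo $\Phi_m(q)$ converts $(q^2;q^3)$ into $(q^3;q^3)$-type factors, since $q^{2+3k}\equiv q^{3(k-N)+2+3N}=q^{3(k-N)+m+1}$.
\end{itemize}

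\textbf{Main obstacle and fallback.} The hardest step is the bookkeeping when $s>0$: matching the powers of $q$ and $a$ so that the product of all the reflected factors equals $-1$ times the original term, including the mismatch between $[6n+1]$ and $[6n-6s+1]$. The $a$-powers $(-a)^{N-2n}$ and $(-1/a)^{N-2n}$ cancel, which is encouraging. If the direct pairing does not close, the fallback is to recognise the sum as a terminating very-well-poised ${}_8\phi_7$ in base $q^3$, with $a\to q$, the parameters $aq, q/a, q^{1-3s}, q^{1+3s}$, and $q^{1-m}$ inserted using $q^m\equiv 1$, so that it ends at $n=N$. Applying Watson's transformation (the $h=2$ case of \eqref{Andrews multi-series}) then leaves a prefactor $(q^4;q^3)_N$, i.e.\ $(aq;q)_M$ in the notation of \eqref{Andrews multi-series} with $a=q$, base $q^3$ and $M=N$. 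This prefactor contains $1-q^{3N+1}=1-q^m\equiv 0 \pmod{\Phi_m(q)}$, while the ${}_4\phi_3$ denominators stay coprime to $\Phi_m(q)$, which gives the congruence directly. I would try the Watson route first, since the vanishing factor $(q^4;q^3)_N$ appears immediately.
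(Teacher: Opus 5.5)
Your pairing uses the wrong involution, and for $s\ge 1$ this is a genuine failure rather than a bookkeeping issue.

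Under $n\mapsto N+s-n$ the indices $n-s,\,n+s$ become $N-n$ and $N-(n-2s)$. Lemma~\ref{Lemma 1} reflects these back to $n$ and $n-2s$, not to $n\pm s$. Similarly, $(aq;q^3)_{N+s-n}/(q^3/a;q^3)_{N+s-n}$ reflects to index $n-s$, and $[6n+1]$ turns into $[6n-6s+1]$; the mismatch you noticed is a symptom of this.

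Concretely, take $m=10$, $s=1$, $N=3$, and write $T(n)$ for the summand. You pair $n=1$ with $n=3$. But $T(3)\equiv 0$, since $(q;q^3)_4$ contains $1-q^{10}$ and the denominator is coprime to $\Phi_{10}(q)$. Meanwhile $T(1)=[7]\frac{(1-q)(1-q^4)(1-aq)(1-q/a)}{(1-q^2)(1-q^5)(1-aq^3)(1-q^3/a)}\not\equiv 0\pmod{\Phi_{10}(q)}$. Your self-paired middle term $T(2)$ is not $0$ either. The centre of symmetry is dictated by $q^m\equiv 1$, i.e.\ by $N$, not by the summation limits.

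What works, and is what the paper does, is $n\leftrightarrow N-n$ for $s\le n\le N-s$:
\begin{itemize}
\item $[6(N-n)+1]\equiv -q^{-6n-1}[6n+1]$;
\item the indices $N-n\mp s$ reflect to $n\pm s$, so the pair is simply swapped;
\item the $a$-powers cancel, as you observed;
\item all leftover powers of $q$ combine into a power of $q^m$.
\end{itemize}
Hence $T(N-n)\equiv -T(n)$. The missing second ingredient is that the terms with $N-s<n\le N$ are individually $\equiv 0$. There $(q;q^3)_{n+s}$ contains $1-q^{3N+1}=1-q^m$, while the denominator is coprime to $\Phi_m(q)$.

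The Watson fallback does not rescue this. The ${}_8\phi_7$ you describe, with $A=q$ and parameters $aq,q/a,q^{1-3s},q^{1+3s},q^{1-m}$ in base $q^3$, is not the given sum. It drops the factor $(q^3;q^3)_{2n}/(q^2;q^3)_{2n}$ entirely. For $s\ge 1$ the partner of $q^{1+3s}$ is $q^{3-3s}$, so $(q^{3-3s};q^3)_n=0$ sits in the denominator for $n\ge s$. Nor can $q^{1-m}$ be inserted for free, since $(q^{1-m};q^3)_n/(q^{3+m};q^3)_n\equiv (q;q^3)_n/(q^3;q^3)_n\not\equiv 1$.

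Written correctly (after $n\to n+s$), the summand is very-well-poised with $A=q^{6s+1}$ and seven parameters $q,\,q^{3s+3},\,\pm q^{3s+3/2},\,-q^{3s+3},\,aq^{3s+1},\,q^{3s+1}/a$. For generic $a$ this series does not terminate. It terminates only at $a=q^{\pm m}$, and that is where the paper applies Andrews' $h=3$ formula (Theorem~\ref{Theorem 3}), not in this lemma. The prefactor $(q^4;q^3)_N$ you expect is the $s=0$, $a=q^{\pm m}$ prefactor from that computation. It gives no information about generic $a$ modulo $\Phi_m(q)$.
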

\begin{proof}
    Setting $\ell =1, d=3$, and $J= \frac{m-1}{3}$ in \eqref{lemma 1 Eqn}, it follows that, for $s \le n \le \frac{m-1}{3}-s$,
    \begin{align}  \label{lemma 2 eqn 2}
&\displaystyle \frac{(aq;q^3)_{\frac{m-1}{3}-n-s}}
{(q^3/a;q^3)_{\frac{m-1}{3}-n+s}} \notag \\
&=
\frac{ \frac{(aq;q^3)_{\frac{m-1}{3}-n-s}}{(q^3/a;q^3)_{\frac{m-1}{3}-n-s}}          }
{(1- \frac{q^{2+m-3n-3s}}{a})
 (1- \frac{q^{5+m-3n-3s}}{a})
 \cdots
 (1- \frac{q^{m-1-3n+3s}}{a})} \notag \\
&\equiv
\frac{
(-a)^{\frac{m-1}{3}-2n-2s }
q^{(m-1)(m-2)/6+2(n+s)} \frac{(aq;q^3)_{n+s}}{(q^3/a;q^3)_{n+s}} }
{(1- \frac{q^{2-3n-3s}}{a})
 (1- \frac{q^{5-3n-3s}}{a})
 \cdots
 (1- \frac{q^{-1-3n+3s}}{a})} \notag \\
&\equiv
(-a)^{\frac{m-1}{3}-2n}
q^{(m-1)(m-2)/6+ 2n + s + 6ns}
\frac{(aq;q^3)_{n-s}}
{(q^3/a;q^3)_{n+s}}
\pmod{\Phi_m(q)},
\end{align} 
where we have used that $q^m \equiv 1 \pmod{\Phi_m(q)}.$ Similarly,
\begin{align} \label{lemma 2 eqn 3}
    \frac{(aq;q^3)_{\frac{m-1}{3}-n+s}}{(q^3/a;q^3)_{\frac{m-1}{3}-n-s}}
\equiv
(-a)^{\frac{m-1}{3}-2n} q^{(m-1)(m-2)/6+ 2n - s - 6ns}
\frac{(aq;q^3)_{n+s}}{(q^3/a;q^3)_{n-s}}
\pmod{\Phi_m(q)}.
\end{align}
Note that
\begin{align*}
    \frac{(q^3;q^3)_{2n}}{(q^2;q^3)_{2n}} = \frac{(q^\frac{3}{2};q^3)_n (-q^{\frac{3}{2}};q^3)_n (q^3;q^3)_n (-q^3;q^3)_n}{(q;q^3)_n (-q;q^3)_n (q^{\frac{5}{2}};q^3)_n (-q^{\frac{5}{2}};q^3)_n}.
\end{align*}
In view of the $q$-congruences \eqref{lemma 1 Eqn}, \eqref{lemma 2 eqn 2}, and \eqref{lemma 2 eqn 3}, it is easy to verify that, for $J= \frac{m-1}{3}$ and $s \le n \le J - s$, 
\begin{align*}
    & [6(J-n)+1] \frac{(q;q^3)_{J-n-s}(q;q^3)_{J-n+s} (aq;q^3)_{J-n} (q/a;q^3)_{J-n} (q^3;q^3)_{2J-2n}}{(q^3;q^3)_{J-n-s}(q^3;q^3)_{J-n+s} (aq^3;q^3)_{J-n} (q^3/a;q^3)_{J-n} (q^2;q^3)_{2J-2n}} \\
    & \equiv - [6n+1] \frac{(q;q^3)_{n-s}(q;q^3)_{n+s} (aq;q^3)_n (q/a;q^3)_n (q^3;q^3)_{2n}}{(q^3;q^3)_{n-s}(q^3;q^3)_{n+s} (aq^3;q^3)_n (q^3/a;q^3)_n (q^2;q^3)_{2n}} \pmod{\Phi_m(q)}.
\end{align*}
This indicates that
\begin{align*}
    \sum_{n=s}^{(m-1)/3-s}[6n+1] \frac{(q;q^3)_{n-s}(q;q^3)_{n+s} (aq;q^3)_n (q/a;q^3)_n (q^3;q^3)_{2n}}{(q^3;q^3)_{n-s}(q^3;q^3)_{n+s} (aq^3;q^3)_n (q^3/a;q^3)_n (q^2;q^3)_{2n}} \equiv 0 \pmod{\Phi_m(q)}.
\end{align*}
For $ \frac{m-1}{3}-s < n \le \frac{m-1}{3}$, the $q$-shifted factorial $(q;q^3)_{n+s}$ contains the factor of the form $1-q^m$, while the denominator on the left-hand side of \eqref{lemma 2 eqn} is coprime to $\Phi_m(q)$. Thus, each term indexed by $n$ on the left-hand side of \eqref{lemma 2 eqn} is congruent to $0$ modulo $\Phi_m(q)$. This proves that the $q$-congruence \eqref{lemma 2 eqn} holds modulo $\Phi_m(q)$.
\end{proof}

\begin{lemma} \label{lemma 3}
    Let $m$ be a positive integer satisfying $m \equiv 2 \pmod{3}$, and $s\le (m-3)/3$ a non-negative integer. Then
\begin{align} \label{lemma 3 eqn}
&\sum_{n=s}^{(m+1)/3}[6n-1] \frac{(q^{-1};q^3)_{n-s}(q^{-1};q^3)_{n+s} (aq^{-1};q^3)_n (q^{-1}/a;q^3)_n (q^3;q^3)_{2n}}{(q^3;q^3)_{n-s}(q^3;q^3)_{n+s} (aq^3;q^3)_n (q^3/a;q^3)_n (q^{-2};q^3)_{2n}} \equiv 0 \pmod{\Phi_m(q)}.
\end{align}
\end{lemma}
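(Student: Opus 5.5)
We will mirror the proof of Lemma \ref{lemma 2}, now with $\ell=-1$, $d=3$ and $J=\frac{m+1}{3}$ in Lemma \ref{Lemma 1}. The congruence $dJ\equiv-\ell\pmod m$ holds because $3J=m+1\equiv 1\pmod m$. We also need $J\le m-1$, which holds for $m\ge 2$. The case $m=2$ forces $s\le -1/3$, so it is empty, and we may assume $m\ge 5$.

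\emph{Step 1: reflection of the shifted factorials.} Lemma \ref{Lemma 1} then gives, for $0\le n\le J$,
\[
\frac{(aq^{-1};q^3)_{J-n}}{(q^3/a;q^3)_{J-n}}\equiv(-a)^{J-2n}q^{J(3J-5)/2+4n}\frac{(aq^{-1};q^3)_n}{(q^3/a;q^3)_n}\pmod{\Phi_m(q)}.
\]
Next we derive the analogues of \eqref{lemma 2 eqn 2} and \eqref{lemma 2 eqn 3} for the mismatched indices $J-n\mp s$ versus $J-n\pm s$. We write, for example,
\[
\frac{(aq^{-1};q^3)_{J-n-s}}{(q^3/a;q^3)_{J-n+s}}
=\frac{(aq^{-1};q^3)_{J-n-s}}{(q^3/a;q^3)_{J-n-s}}\cdot\frac{1}{\prod_{i=J-n-s}^{J-n+s-1}(1-q^{3+3i}/a)}.
\]
We apply the lemma with $n$ replaced by $n+s$. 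In the extra factors we reduce $q^{3J}=q^{m+1}\equiv q$ modulo $\Phi_m(q)$, and turn them into the missing factors of $(aq^{-1};q^3)_{n+s}/(aq^{-1};q^3)_{n-s}$ by using $1-x^{-1}=-x^{-1}(1-x)$. The result is a congruence of the form
\[
(-a)^{J-2n}q^{c(n,s)}\,\frac{(aq^{-1};q^3)_{n-s}}{(q^3/a;q^3)_{n+s}},
\]
and symmetrically with $s\to -s$. Specialising $a=1$ in these congruences handles the two factors $(q^{-1};q^3)_{n\mp s}/(q^3;q^3)_{n\mp s}$. Applying the reflection at $a$ and at $1/a$ handles the product $(aq^{-1};q^3)_n(q^{-1}/a;q^3)_n/((aq^3;q^3)_n(q^3/a;q^3)_n)$. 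Note that $(aq^3;q^3)_n$ is paired with $(q^{-1}/a;q^3)_n$ here.

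\emph{Step 2: the remaining factors.} For $(q^3;q^3)_{2n}/(q^{-2};q^3)_{2n}$ we use the splitting
\[
\frac{(q^3;q^3)_{2n}}{(q^{-2};q^3)_{2n}}=\frac{(q^{3/2},-q^{3/2},q^3,-q^3;q^3)_n}{(q^{-1},-q^{-1},q^{1/2},-q^{1/2};q^3)_n}.
\]
Each quotient is then reflected, either by Lemma \ref{Lemma 1} with $a$ specialised to $\pm q^{\ldots}$, or by direct reversal of products using $q^m\equiv1$. For the prefactor,
\[
[6(J-n)-1]=[2m+1-6n]\equiv[1-6n]=-q^{1-6n}[6n-1]\pmod{\Phi_m(q)}.
\]
Multiplying everything together, we check that the powers of $-a$ cancel and the powers of $q$ sum to $0$ modulo $m$. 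This should give that the term indexed by $J-n$ is congruent to minus the term indexed by $n$, for $s\le n\le J-s$. Consequently the sum over this range vanishes modulo $\Phi_m(q)$, since it pairs up. If the range has a middle term $n=J/2$, that term satisfies $T\equiv -T$, so it is $0$ as well.

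\emph{Step 3: the tail $J-s<n\le J$.} Here $n+s>J$, so $(q^{-1};q^3)_{n+s}$ contains the factor $1-q^{3J-1}=1-q^m$, which is divisible by $\Phi_m(q)$. We must check that the denominators $(q^3;q^3)_{n\pm s}$, $(aq^3;q^3)_n$, $(q^3/a;q^3)_n$ and $(q^{-2};q^3)_{2n}$ are coprime to $\Phi_m(q)$. For $(q^3;q^3)_{n+s}$, the hypothesis $s\le (m-3)/3$ gives $n+s\le J+s\le (2m-2)/3<m/3\cdot 2$. The exponents $3i$ are then below $m$ only when $n+s<m/3$, so we must check more carefully whether $3(n+s)\ge m$ can occur. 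In that case $(q^3;q^3)_{n+s}$ contains $1-q^{3\cdot (m+1)/3 \cdots}$; since $3i\equiv0\pmod m$ forces $i=m/3\notin\mathbb Z$ when $3\nmid m$, no factor $1-q^{3i}$ vanishes, and $\Phi_m(q)$ does not divide $(q^3;q^3)_{n+s}$. For $(q^{-2};q^3)_{2n}$ the exponents are $3i-2$ with $i<2n\le 2J$. The equation $3i-2=m$ gives $i=J$, which does occur, and this is the main obstacle. The resolution should be that the numerator $(q^3;q^3)_{2n}$, together with the $(q^{-1};q^3)$ factors, supplies a compensating power of $\Phi_m(q)$. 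Concretely, we would count the multiplicities of $\Phi_m$ in the numerator and denominator of each term, just as Guo and Schlosser do for \eqref{GS (9.3b)}, and confirm a net positive power. The restriction $s\le(m-3)/3$ is exactly what keeps $n-s\ge0$ meaningful and the index $n+s$ inside the range where this count works.
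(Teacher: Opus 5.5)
Your Steps 1 and 2 are the paper's argument. You use Lemma~\ref{Lemma 1} with $\ell=-1$, $d=3$, $J=\frac{m+1}{3}$, and obtain the mismatched-index reflections by peeling off $2s$ factors; these are exactly \eqref{lemma 3 eqn 2} and \eqref{lemma 3 eqn 3}. The rest also matches: the splitting of $(q^3;q^3)_{2n}/(q^{-2};q^3)_{2n}$, the reduction $[6(J-n)-1]\equiv -q^{1-6n}[6n-1]$, and the pairing $n\leftrightarrow J-n$ on $s\le n\le J-s$. The total power of $q$ in the pairing is indeed $q^{1-3J}=q^{-m}\equiv 1$.

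Step 3, however, rests on an arithmetic slip. In fact $3J-2=m-1$, not $m$, and $3i-2=m$ would force $i=(m+2)/3\notin\mathbb{Z}$. More precisely, $\Phi_m(q)\mid 1-q^{3i-2}$ iff $3i\equiv 2\pmod m$ iff $i\equiv 2J\pmod m$. So the first such factor is $1-q^{3\cdot 2J-2}=1-q^{2m}$, which appears only from $(q^{-2};q^3)_{2J+1}$ on. Since $2n\le 2J$, the denominator $(q^{-2};q^3)_{2n}$ is coprime to $\Phi_m(q)$. The same holds for $(q^3;q^3)_{n\pm s}$: cleanly, $m\mid 3i$ iff $m\mid i$, and $i\le 2J<m$. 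It also holds for the $a$-factors. So each tail term is $1-q^m$ times a rational function whose denominator is coprime to $\Phi_m(q)$, hence $\equiv 0$. That is precisely the paper's one-line tail argument.

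As written, though, your proof stops at a ``main obstacle'' and offers only a hoped-for multiplicity count, which could not have rescued it. The numerator $(q^3;q^3)_{2n}$ has no factor divisible by $\Phi_m(q)$, since neither $3i=m$ nor $3i=2m$ is possible. In the tail, the only numerator factor divisible by $\Phi_m(q)$ is the $1-q^m$ from $(q^{-1};q^3)_{n+s}$, so the net multiplicity would be $0$, not positive.

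Moreover, a $\Phi_m$-factor at index $i=J$ of $(q^{-2};q^3)_{2n}$ would affect every $n\ge (J+1)/2$. For small $s$ (e.g.\ $s=0$) many of these lie in your paired range, so Step 2 would be meaningless modulo $\Phi_m(q)$ too. In other words, Step 2 tacitly uses the very coprimality that Step 3 denies. Replacing the end of Step 3 by the coprimality check above completes the proof and makes it coincide with the paper's.

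Finally, your closing remark about $s\le (m-3)/3$ is inaccurate. The condition $n-s\ge 0$ comes from the summation range, and the argument works verbatim for every $0\le s\le J$. So this hypothesis plays no role in the lemma itself.
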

\begin{proof}
     Taking $\ell =-1, d=3$, and $J= \frac{m+1}{3}$ in \eqref{lemma 1 Eqn}, it follows that, for $s \le n \le \frac{m+1}{3}-s$,
    \begin{align} \label{lemma 3 eqn 2}
&\displaystyle \frac{(aq^{-1};q^3)_{\frac{m+1}{3}-n-s}}
{(q^3/a;q^3)_{\frac{m+1}{3}-n+s}} \notag \\
&=
\frac{ \frac{(aq^{-1};q^3)_{\frac{m+1}{3}-n-s}}{(q^3/a;q^3)_{\frac{m+1}{3}-n-s}}          }
{(1- \frac{q^{4+m-3n-3s}}{a})
 (1- \frac{q^{7+m-3n-3s}}{a})
 \cdots
 (1- \frac{q^{m+1-3n+3s}}{a})} \notag \\
&\equiv
\frac{
(-a)^{\frac{m+1}{3}-2n-2s }
q^{(m+1)(m-4)/6+4(n+s)} \frac{(aq^{-1};q^3)_{n+s}}{(q^3/a;q^3)_{n+s}} }
{(1- \frac{q^{4-3n-3s}}{a})
 (1- \frac{q^{7-3n-3s}}{a})
 \cdots
 (1- \frac{q^{1-3n+3s}}{a})} \notag \\
&\equiv
(-a)^{\frac{m+1}{3}-2n}
q^{(m+1)(m-4)/6+ 4n - s + 6ns}
\frac{(aq^{-1};q^3)_{n-s}}
{(q^3/a;q^3)_{n+s}}
\pmod{\Phi_m(q)}.
\end{align} 
Similarly,
\begin{align} \label{lemma 3 eqn 3}
    \frac{(aq^{-1};q^3)_{\frac{m+1}{3}-n+s}}{(q^3/a;q^3)_{\frac{m+1}{3}-n-s}}
\equiv
(-a)^{\frac{m+1}{3}-2n} q^{(m+1)(m-4)/6+ 4n + s - 6ns}
\frac{(aq^{-1};q^3)_{n+s}}{(q^3/a;q^3)_{n-s}}
\pmod{\Phi_m(q)}.
\end{align}
Noting that
\begin{align*}
    \frac{(q^3;q^3)_{2n}}{(q^{-2};q^3)_{2n}} = \frac{(q^\frac{3}{2};q^3)_n (-q^{\frac{3}{2}};q^3)_n (q^3;q^3)_n (-q^3;q^3)_n}{(q^{-1};q^3)_n (-q^{-1};q^3)_n (q^{\frac{1}{2}};q^3)_n (-q^{\frac{1}{2}};q^3)_n}.
\end{align*}
Via the $q$-congruences \eqref{lemma 1 Eqn}, \eqref{lemma 3 eqn 2}, and \eqref{lemma 3 eqn 3}, it is not difficult to check that, for $J=\frac{m+1}{3}$ and $s \le n \le J - s$, 

\begin{align*}
    & [6(J-n)-1] \frac{(q^{-1};q^3)_{J-n-s}(q^{-1};q^3)_{J-n+s} (aq^{-1};q^3)_{J-n} (q^{-1}/a;q^3)_{J-n} (q^3;q^3)_{2J-2n}}{(q^3;q^3)_{J-n-s}(q^3;q^3)_{J-n+s} (aq^3;q^3)_{J-n} (q^3/a;q^3)_{J-n} (q^{-2};q^3)_{2J-2n}} \\
    & \equiv -[6n-1] \frac{(q^{-1};q^3)_{n-s}(q^{-1};q^3)_{n+s} (aq^{-1};q^3)_n (q^{-1}/a;q^3)_n (q^3;q^3)_{2n}}{(q^3;q^3)_{n-s}(q^3;q^3)_{n+s} (aq^3;q^3)_n (q^3/a;q^3)_n (q^{-2};q^3)_{2n}} \pmod{\Phi_m(q)}.
\end{align*}
This implies that 
\begin{align*} 
&\sum_{n=s}^{(m+1)/3-s}[6n-1] \frac{(q^{-1};q^3)_{n-s}(q^{-1};q^3)_{n+s} (aq^{-1};q^3)_n (q^{-1}/a;q^3)_n (q^3;q^3)_{2n}}{(q^3;q^3)_{n-s}(q^3;q^3)_{n+s} (aq^3;q^3)_n (q^3/a;q^3)_n (q^{-2};q^3)_{2n}} \equiv 0 \pmod{\Phi_m(q)}.
\end{align*}
When $ n \in \left( \frac{m+1}{3}-s, \frac{m+1}{3} \right]$, the factor $1-q^m$ appears in the $q$-shifted factorial $(q^{-1};q^3)_{n+s}$. This shows that the summand on the left-hand side of \eqref{lemma 3 eqn} is congruent to $0$ modulo $\Phi_m(q)$. This completes the proof.
\end{proof}

\section{Proof of Theorem \ref{Main Thm 1}} \label{Sec3}
In order to prove Theorem \ref{Main Thm 1}, we first establish their parametric form with an additional parameter $a$ so that the generalized $q$-congruence holds modulo $(1-aq^{m}) (a-q^{m}).$
\begin{theorem} \label{Theorem 3}
    Let $m>1$ be an integer satisfying $m \equiv 1 \pmod{3}$, and $s\le (m-1)/3$ a non-negative integer. Then, modulo $(1-aq^m)(a-q^m),$
\begin{align} \label{Theorem 3 eqn}
&\sum_{n=s}^{(m-1)/3}[6n+1] \frac{(q;q^3)_{n-s}(q;q^3)_{n+s} (aq;q^3)_n (q/a;q^3)_n (q^3;q^3)_{2n}}{(q^3;q^3)_{n-s}(q^3;q^3)_{n+s} (aq^3;q^3)_n (q^3/a;q^3)_n (q^2;q^3)_{2n}} \notag \\
&\quad \equiv (-1)^{\frac{m-1}{3}-s} q^{1-m(s+1) + 3s^2 + 2s} [m+3s] \frac{1+q^m}{1+q^{3s+1}} \frac{(q^m,q^{m+1},q^{m+2-3s};q^3)_s}{(q^{m+3};q^3)_s (q^2;q^3)_{2s}} \notag \\
&\quad \quad \times \sum_{k=0}^{(m-1)/3-s} \sum_{j=0}^{(m-1)/3-s-k} \left\{ \frac{(q^{3s};q^3)_k (-q;q^3)_j (q^{6s+3};q^6)_k}{(q^3;q^3)_k (q^3;q^3)_j (q^{6s+3},q^{3s+1};q^3)_k} \right. \notag \\ & \qquad \qquad \qquad \qquad \qquad \qquad \times \left. \frac{(-q^{3s+3},q^{3s+1+m}, q^{3s+1-m};q^3)_{k+j}}{(q^{6s+5};q^6)_{k+j} (-q^{3s+4}; q^3)_{k+j}}  \right\} (-q)^k q^{3(k+j)}.
\end{align}
\end{theorem}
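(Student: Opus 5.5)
Since $(1-aq^m)$ and $(a-q^m)$ are coprime polynomials in $a$, it suffices to prove \eqref{Theorem 3 eqn} in the two cases $a=q^{-m}$ and $a=q^{m}$. The left-hand side of \eqref{Theorem 3 eqn} is symmetric under $a\mapsto 1/a$, and the right-hand side does not involve $a$. So it is enough to treat $a=q^{-m}$ and show that the congruence then becomes an exact identity of rational functions in $q$. With $a=q^{-m}$ the factor $(q/a;q^3)_n=(q^{1+m};q^3)_n$ and $(aq;q^3)_n=(q^{1-m};q^3)_n$, which vanishes for $n>(m-1)/3$. Hence the truncated sum equals the full (terminating) series.

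The key step is to recognize the left-hand side as an instance of Andrews' transformation \eqref{Andrews multi-series} with $h=3$ and base $q^3$. After shifting $n\mapsto n+s$, the summand is built from $(q^{6s+1};q^3)_n$-type factors, so the natural choice is $a\to q^{6s+1}$ in the base $q^3$. The very-well-poised part $[6n+1]$ then becomes $(q\sqrt a,-q\sqrt a)/(\sqrt a,-\sqrt a)$ times a constant. The ratio $(q^3;q^3)_{2n}/(q^2;q^3)_{2n}$ is split as in the proof of Lemma \ref{lemma 2}, into factors with parameters $\pm q^{3/2}$ and $\pm q^3$ over $\pm q$ and $q^{5/2}$-type factors. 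I would then pair the upper parameters into $(e_1,f_1),(e_2,f_2),(e_3,f_3)$ together with $q^{-M}$, where $q^{-M}=q^{3s+1-m}$ (base $q^3$, $M=(m-1)/3-s$). Concretely, I would take one pair to be $(q^{3s+3/2},-q^{3s+3/2})$, which produces the $(q^{6s+3};q^6)_k$ factors. A second pair should be $(-q^{3s+3},q^{3s+1})$ or similar, giving $(-q;q^3)_j$ and $(q^{3s};q^3)_k$. The third pair $(e_3,f_3)$ should contain $q^{3s+1+m}$, consistent with the $q^{3s+1\pm m}$ factors in the $(k+j)$-products. Checking that the argument $a^3q^{3+3M}/(e_1f_1e_2f_2e_3f_3)$ equals $1$ (the summand has no extra power of $q$) fixes the remaining parameter. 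The double sum on the right of \eqref{Andrews multi-series} with $j_1=k$, $j_2=j$ should then reproduce exactly the double sum in \eqref{Theorem 3 eqn}, including the weights $(-q)^kq^{3(k+j)}$.

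After applying \eqref{Andrews multi-series}, I would collect the prefactor. This consists of the factor $(aq,aq/e_3f_3;q^3)_M/(aq/e_3,aq/f_3;q^3)_M$ times the $n\mapsto n+s$ shift constants such as $(q;q^3)_{2s}(q;q^3)_s^2(q^3;q^3)_{2s}/\ldots$. I would simplify this with $q^m$ kept explicit until the shifted factorials telescope into $[m+3s]\frac{1+q^m}{1+q^{3s+1}}\frac{(q^m,q^{m+1},q^{m+2-3s};q^3)_s}{(q^{m+3};q^3)_s(q^2;q^3)_{2s}}$. Reversing products of length $M$ via $(x;q^3)_M=(-x)^Mq^{3\binom M2}(q^{3-3M}/x;q^3)_M$ produces the sign $(-1)^{(m-1)/3-s}$ and the power $q^{1-m(s+1)+3s^2+2s}$.

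The main obstacle is choosing the parameter pairing. Andrews' formula is not symmetric among the pairs, and the summation indices $j_1,j_2$ and the factor $(aq)^{j_1}/(e_2f_2)^{j_1}$ must match $(-q)^k$. I would first try $e_3f_3$ with $e_3=q^{3s+1+m}$ and $f_3$ chosen to make the argument equal to $1$, and adjust the order of the other two pairs until the $k$- and $j$-factors align. The bookkeeping of the prefactor is tedious but routine.
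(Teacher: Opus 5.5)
Your route is the paper's: reduce to $a=q^{-m}$ via the $a\mapsto 1/a$ symmetry and coprimality, shift $n\mapsto n+s$, apply Andrews' transformation with $h=3$, base $q^3$, $a=q^{6s+1}$, $M=(m-1)/3-s$, then simplify the prefactor. But you leave open the one step with real content, the choice of the $(e_i,f_i)$, and the choice you sketch fails.

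First, there is no remaining parameter for the argument to fix. With $aq=q^{6s+4}$ in base $q^3$, the shifted summand has exactly six upper parameters: $q,\ q^{3s+3},\ -q^{3s+3},\ \pm q^{3s+3/2},\ q^{3s+1+m}$. Their lower partners are $q^{6s+3},\ q^{3s+1},\ -q^{3s+1},\ \pm q^{3s+5/2},\ q^{3s+3-m}$. The argument $a^3q^{3(3+M)}/(e_1f_1e_2f_2e_3f_3)$ then equals $1$ automatically. In particular $q^{3s+1}$ is a lower parameter and cannot be one of the $e_i,f_i$; your pair $(-q^{3s+3},q^{3s+1})$ gives $aq/ef=-1$, which matches nothing. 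Also, $(q^{3s};q^3)_k$ and $(-q;q^3)_j$ never come from a single pair. In Andrews' formula they are $(aq/e_1f_1;q^3)_{j_1}$ and $(aq/e_2f_2;q^3)_{j_2}$, from two different pairs.

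The pairing that works (the paper's) is $(e_1,f_1)=(q,q^{3s+3})$, $(e_2,f_2)=(q^{3s+3/2},-q^{3s+3/2})$, $(e_3,f_3)=(-q^{3s+3},q^{3s+1+m})$. It gives:
\begin{itemize}
\item $aq/e_1f_1=q^{3s}$ and $(aq/e_1,aq/f_1)=(q^{6s+3},q^{3s+1})$;
\item $(e_2,f_2;q^3)_{j_1}=(q^{6s+3};q^6)_{j_1}$ and $aq/e_2f_2=-q$;
\item $(aq/e_2,aq/f_2;q^3)_{j_1+j_2}=(q^{6s+5};q^6)_{j_1+j_2}$ and $e_3f_3q^{-3M}/a=-q^{3s+4}$;
\item $(aq)^{j_1}/(e_2f_2)^{j_1}=(-q)^{j_1}$.
\end{itemize}
So the double sum is reproduced term by term, with prefactor $(q^{6s+4},-q^{-m};q^3)_M/(-q^{3s+1},q^{3s+3-m};q^3)_M$.

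Multiply by the shift constant $[6s+1](q;q^3)_{2s}(q^{1+m},q^{1-m};q^3)_s/\bigl((q^{3+m},q^{3-m};q^3)_s(q^2;q^3)_{2s}\bigr)$. With $J=(m-1)/3$, $[6s+1](q;q^3)_{2s}(q^{6s+4};q^3)_M$ telescopes to $[m+3s](q;q^3)_{J+s}$, and $(q^{3-m};q^3)_s(q^{3s+3-m};q^3)_M$ to $(q^{3-m};q^3)_J$. Reversing $(q^{3-m};q^3)_J$, $(q^{1-m};q^3)_s$ and $(-q^{-m};q^3)_M$ then yields the sign $(-1)^{J-s}$, the power $q^{1-m(s+1)+3s^2+2s}$, the factors $(q^m,q^{m+2-3s};q^3)_s$, and $(1+q^m)/(1+q^{3s+1})$. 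With this pairing supplied, your argument goes through as in the paper.
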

\begin{proof}
For $a=q^{-m}$ or $a=q^{m}$, the left-hand side of \eqref{Theorem 3 eqn} is equal to
\begin{align} \label{Theorem 3 eqn 2}
    & \sum_{n=s}^{(m-1)/3}[6n+1] \frac{(q;q^3)_{n-s}(q;q^3)_{n+s} (q^{1-m};q^3)_n (q^{1+m};q^3)_n (q^3;q^3)_{2n}}{(q^3;q^3)_{n-s}(q^3;q^3)_{n+s} (q^{3-m};q^3)_n (q^{3+m};q^3)_n (q^2;q^3)_{2n}} \notag \\
    & = \sum_{n=0}^{(m-1)/3-s}[6n+6s+1] \frac{(q;q^3)_{n}(q;q^3)_{n+2s} (q^{1-m};q^3)_{n+s} (q^{1+m};q^3)_{n+s} (q^3;q^3)_{2n+2s}}{(q^3;q^3)_{n}(q^3;q^3)_{n+2s} (q^{3-m};q^3)_{n+s} (q^{3+m};q^3)_{n+s} (q^2;q^3)_{2n+2s}}.
\end{align}
Substituting $h=3$ and then taking $q \to q^3, a=q^{6s+1}, e_1=q, f_1= q^{3s+3}, e_2=q^{3s+\frac{3}{2}}, f_2 =-q^{3s+\frac{3}{2}}, e_3= -q^{3s+3}, f_3= q^{3s+1+m}, M \to \frac{m-1}{3}-s$ in Andrews multi-series transformation \eqref{Andrews multi-series}, the right-hand side of \eqref{Theorem 3 eqn 2} can be rewritten as
\begin{align*}
    & [6s+1] \frac{(q^{1+m},q^{1-m};q^3)_s (q;q^3)_{2s}}{(q^{3+m},q^{3-m};q^3)_s (q^2;q^3)_{2s}} \frac{(q^{6s+4},-q^{-m};q^3)_{\frac{m-1}{3}-s}}{(-q^{3s+1}, q^{3s+3-m};q^3)_{\frac{m-1}{3}-s}}  \\
    &\quad \quad \times \sum_{k=0}^{(m-1)/3-s} \sum_{j=0}^{(m-1)/3-s-k} \left\{ \frac{(q^{3s};q^3)_k (-q;q^3)_j (q^{6s+3};q^6)_k}{(q^3;q^3)_k (q^3;q^3)_j (q^{6s+3},q^{3s+1};q^3)_k} \right. \notag \\ & \qquad \qquad \qquad \qquad \qquad \qquad \times \left. \frac{(-q^{3s+3},q^{3s+1+m}, q^{3s+1-m};q^3)_{k+j}}{(q^{6s+5};q^6)_{k+j} (-q^{3s+4}; q^3)_{k+j}}  \right\} (-q)^k q^{3(k+j)} \\
    & = [m+3s] \frac{(q;q^3)_{\frac{m-1}{3}+s} (q^{1+m},q^{1-m};q^3)_s (-q^{-m};q^3)_{\frac{m-1}{3}-s}}{(q^{3+m};q^3)_s (q^{3-m};q^3)_{\frac{m-1}{3}} (q^2;q^3)_{2s} (-q^{3s+1};q^3)_{\frac{m-1}{3}-s}} \\
    &\quad \quad \times \sum_{k=0}^{(m-1)/3-s} \sum_{j=0}^{(m-1)/3-s-k} \left\{ \frac{(q^{3s};q^3)_k (-q;q^3)_j (q^{6s+3};q^6)_k}{(q^3;q^3)_k (q^3;q^3)_j (q^{6s+3},q^{3s+1};q^3)_k} \right. \notag \\ & \qquad \qquad \qquad \qquad \qquad \qquad \times \left. \frac{(-q^{3s+3},q^{3s+1+m}, q^{3s+1-m};q^3)_{k+j}}{(q^{6s+5};q^6)_{k+j} (-q^{3s+4}; q^3)_{k+j}}  \right\} (-q)^k q^{3(k+j)} \\
    & = (-1)^{\frac{m-1}{3}-s} q^{1-m(s+1) + 3s^2 + 2s} [m+3s] \frac{1+q^m}{1+q^{3s+1}} \frac{(q^m,q^{m+1},q^{m+2-3s};q^3)_s}{(q^{m+3};q^3)_s (q^2;q^3)_{2s}}  \\
&\quad \quad \times \sum_{k=0}^{(m-1)/3-s} \sum_{j=0}^{(m-1)/3-s-k} \left\{ \frac{(q^{3s};q^3)_k (-q;q^3)_j (q^{6s+3};q^6)_k}{(q^3;q^3)_k (q^3;q^3)_j (q^{6s+3},q^{3s+1};q^3)_k} \right. \notag \\ & \qquad \qquad \qquad \qquad \qquad \qquad \times \left. \frac{(-q^{3s+3},q^{3s+1+m}, q^{3s+1-m};q^3)_{k+j}}{(q^{6s+5};q^6)_{k+j} (-q^{3s+4}; q^3)_{k+j}}  \right\} (-q)^k q^{3(k+j)}.
\end{align*}
This implies that the above $q$-congruence \eqref{Theorem 3 eqn} is true modulo $1-aq^m$ and $a-q^m$. We complete the proof of Theorem \ref{Theorem 3} because the polynomials $(1-aq^m)$ and $(a-q^m)$ are pairwise coprime.
\end{proof}
\begin{proof}[Proof of Theorem \ref{Main Thm 1}]
    For $1 \le s \le \frac{m-1}{3}$, the $q$-shifted factorial $(q^m;q^3)_s$ has the factor of the form $1-q^m$, whereas the $s=0$ case implies $ \displaystyle [m+3s] \frac{1+q^m}{1+q^{3s+1}}=[m]_{q^2}$, which is a multiple of $1-q^m$. This implies that the right-hand side of \eqref{Theorem 3 eqn} is congruent to $0$ modulo $\Phi_m(q)$. Using Lemma \ref{lemma 2}, we conclude that the $q$-congruence \eqref{Theorem 3 eqn} is true modulo $\Phi_m(q)$.
    Moreover, since the polynomials $\Phi_m(q), (1-aq^m),$ and $(a-q^m)$ are coprime with each other. Thus, by the Chinese remainder theorem for coprime polynomials, the $q$-supercongruence \eqref{Theorem 3 eqn} is true modulo $\Phi_m(q) (1-aq^m)(a-q^m)$. By setting $a=1$ in \eqref{Theorem 3 eqn}, we complete the proof of Theorem \ref{Main Thm 1}.
\end{proof}

\section{Proof of Theorem \ref{Main Thm 2}} \label{Sec4}
The proof of Theorem \ref{Main Thm 2} is similar to that of Theorem \ref{Main Thm 1}. We first deduce the generalized version of the $q$-congruence \eqref{Main Thm 2 eqn} as follows.
\begin{theorem} \label{Theorem 4}
    Let $m$ be a positive integer satisfying $m \equiv 2 \pmod{3}$, and $s\le (m-3)/3$ a non-negative integer. Then, modulo $(1-aq^m)(a-q^m),$
\begin{align} \label{Theorem 4 eqn}
&\sum_{n=s}^{(m+1)/3}[6n-1] \frac{(q^{-1};q^3)_{n-s}(q^{-1};q^3)_{n+s} (aq^{-1};q^3)_n (q^{-1}/a;q^3)_n (q^3;q^3)_{2n}}{(q^3;q^3)_{n-s}(q^3;q^3)_{n+s} (aq^3;q^3)_n (q^3/a;q^3)_n (q^{-2};q^3)_{2n}} \notag \\
&\quad \equiv (-1)^{\frac{m+1}{3}-s} q^{-1-m(s+1) + 3s^2 - 2s} [m+3s] \frac{1+q^m}{1+q^{3s-1}} \frac{(q^m,q^{m-1},q^{m+4-3s};q^3)_s}{(q^{m+3};q^3)_s (q^{-2};q^3)_{2s}} \notag \\
&\quad \quad \times \sum_{k=0}^{(m+1)/3-s} \sum_{j=0}^{(m+1)/3-s-k} \left\{ \frac{(q^{3s};q^3)_k (-q^{-1};q^3)_j (q^{6s+3};q^6)_k}{(q^3;q^3)_k (q^3;q^3)_j (q^{6s+3},q^{3s-1};q^3)_k} \right. \notag \\ & \qquad \qquad \qquad \qquad \qquad \qquad \times \left. \frac{(-q^{3s+3},q^{3s-1+m}, q^{3s-1-m};q^3)_{k+j}}{(q^{6s+1};q^6)_{k+j} (-q^{3s+2}; q^3)_{k+j}}  \right\} (-q^{-1})^k q^{3(k+j)}.
\end{align}
\end{theorem}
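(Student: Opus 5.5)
The plan mirrors the proof of Theorem \ref{Theorem 3}. Since $(1-aq^m)$ and $(a-q^m)$ are coprime polynomials in $a$, it suffices to verify \eqref{Theorem 4 eqn} as an exact identity for $a=q^{-m}$ and for $a=q^{m}$. Both specialisations give the same left-hand side, because the factors $(aq^{-1};q^3)_n(q^{-1}/a;q^3)_n/\bigl((aq^3;q^3)_n(q^3/a;q^3)_n\bigr)$ are symmetric under $a\mapsto 1/a$. Taking $a=q^{-m}$, the sum carries $(q^{-1-m};q^3)_n$, which vanishes for $n>(m+1)/3$, so the truncation at $(m+1)/3$ is natural. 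Shifting $n\to n+s$ turns the left-hand side into
\[
\sum_{n=0}^{(m+1)/3-s}[6n+6s-1]\frac{(q^{-1};q^3)_n(q^{-1};q^3)_{n+2s}(q^{-1-m},q^{-1+m};q^3)_{n+s}(q^3;q^3)_{2n+2s}}{(q^3;q^3)_n(q^3;q^3)_{n+2s}(q^{3-m},q^{3+m};q^3)_{n+s}(q^{-2};q^3)_{2n+2s}}.
\]
After pulling out the $n=0$ factor, this is a very-well-poised terminating series in base $q^3$ with $a=q^{6s-1}$.

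The next step is to write the summand in the form required by Andrews' transformation \eqref{Andrews multi-series} with $h=3$. I would replace $q$ by $q^3$ and use the splitting
\[
\frac{(q^3;q^3)_{2n}}{(q^{-2};q^3)_{2n}}=\frac{(q^{3/2},-q^{3/2},q^3,-q^3;q^3)_n}{(q^{-1},-q^{-1},q^{1/2},-q^{1/2};q^3)_n},
\]
shifted appropriately. The natural parameter choice, by analogy with the proof of Theorem \ref{Theorem 3}, is
\[
a=q^{6s-1},\quad e_1=q^{-1},\quad f_1=q^{3s+3},\quad e_2=q^{3s+3/2},\quad f_2=-q^{3s+3/2},\quad e_3=-q^{3s+3},\quad f_3=q^{3s-1+m},\quad M=\tfrac{m+1}{3}-s,
\]
so that $q^{-3M}=q^{3s-1-m}$. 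Several checks are then needed. The quotients $aq^3/e_i$ and $aq^3/f_i$ should reproduce the denominators $(q^{6s+3};q^3)$, $(q^{3s-1};q^3)$, $(q^{3s+1/2},-q^{3s+1/2};q^3)$, $(-q^{3s-1};q^3)$ and $(q^{3s+3-m};q^3)$. The argument $a^3q^{3(3+M)}/(e_1f_1e_2f_2e_3f_3)$ should equal $1$. On the right-hand side, the factors $(aq/e_1f_1)=(q^{3s};q^3)$, $(e_2,f_2)\to(q^{6s+3};q^6)$, $(aq/e_2,aq/f_2)\to(q^{6s+1};q^6)$ and $(-q^{-1};q^3)_j$ should match the double sum in \eqref{Theorem 4 eqn}, as should the weight $(-q^{-1})^kq^{3(k+j)}$. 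This weight comes from $(aq)^{j_1}q^{j_1+j_2}/(e_2f_2)^{j_1}$ in base $q^3$, which gives $(q^{6s+2}/(-q^{6s+3}))^k=(-q^{-1})^k$ times $q^{3(k+j)}$.

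What remains is to simplify the prefactor. It is
\[
[6s-1]\frac{(q^{-1\mp m};q^3)_s(q^{-1};q^3)_{2s}}{(q^{3\mp m};q^3)_s(q^{-2};q^3)_{2s}}\cdot\frac{(q^{6s+2},\,-q^{-m};q^3)_M}{(-q^{3s-1},\,q^{3s+3-m};q^3)_M}.
\]
I would combine $[6s-1](q^{-1};q^3)_{2s}(q^{6s+2};q^3)_M$ into $[m+3s](q^{-1};q^3)_{(m+1)/3+s}$ up to a power of $(1-q)$. Then I would reverse the products $(-q^{-m};q^3)_M/(-q^{3s-1};q^3)_M$ and $(q^{-1-m};q^3)_s/(q^{3-m};q^3)_{(m+1)/3}$, using $(x;q)_n=(-x)^nq^{\binom n2}(q^{1-n}/x;q)_n$. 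This should produce the sign $(-1)^{(m+1)/3-s}$, the power $q^{-1-m(s+1)+3s^2-2s}$, the factor $(1+q^m)/(1+q^{3s-1})$, and $(q^m,q^{m-1},q^{m+4-3s};q^3)_s/(q^{m+3};q^3)_s$.

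The main obstacle is this last bookkeeping. The exponent of $q$ and the telescoping of the $-q^{\cdot}$ products are where errors are likely, especially with the negative offsets ($q^{-1}$, $q^{-2}$). I would cross-check the result against the case $s=0$, where the formula must reduce to \eqref{Curious QA 2}. Finally, the hypothesis $s\le (m-3)/3$ makes $M\ge 1$ and keeps the denominators, such as $(q^{3s+3-m};q^3)_M$, nonvanishing after specialisation, so both evaluations are legitimate. Coprimality of $(1-aq^m)$ and $(a-q^m)$ then completes the argument.
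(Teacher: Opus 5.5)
Your proposal is correct and follows the paper's proof essentially step for step. The paper likewise evaluates at $a=q^{\pm m}$ and shifts $n\to n+s$. It then applies Andrews' transformation with $h=3$, base $q^3$ and exactly your parameters $a=q^{6s-1}$, $e_1=q^{-1}$, $f_1=q^{3s+3}$, $e_2=-f_2=q^{3s+3/2}$, $e_3=-q^{3s+3}$, $f_3=q^{3s-1+m}$, $M=(m+1)/3-s$. It passes through the same intermediate form $[m+3s](q^{-1};q^3)_{(m+1)/3+s}\cdots$ and concludes by coprimality.

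One small correction: the nonvanishing of denominators such as $(q^{3s+3-m};q^3)_M$ comes from $m\equiv 2\pmod 3$, not from the bound on $s$. Also, the bookkeeping you flag does give exactly the exponent $-1-m(s+1)+3s^2-2s$.
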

\begin{proof}
    For $a=q^{-m}$ or $a=q^{m}$, the left-hand side of \eqref{Theorem 4 eqn} is equal to
\begin{align} \label{Theorem 4 eqn 2}
    & \sum_{n=s}^{(m+1)/3}[6n-1] \frac{(q^{-1};q^3)_{n-s}(q^{-1};q^3)_{n+s} (q^{-1-m};q^3)_n (q^{-1+m};q^3)_n (q^3;q^3)_{2n}}{(q^3;q^3)_{n-s}(q^3;q^3)_{n+s} (q^{3-m};q^3)_n (q^{3+m};q^3)_n (q^{-2};q^3)_{2n}} \notag \\
    & = \sum_{n=0}^{(m+1)/3-s}[6n+6s-1] \frac{(q^{-1};q^3)_{n}(q^{-1};q^3)_{n+2s} (q^{-1-m};q^3)_{n+s} (q^{-1+m};q^3)_{n+s} (q^3;q^3)_{2n+2s}}{(q^3;q^3)_{n}(q^3;q^3)_{n+2s} (q^{3-m};q^3)_{n+s} (q^{3+m};q^3)_{n+s} (q^{-2};q^3)_{2n+2s}}.
\end{align}
Putting $h=3$ and then letting $q \to q^3, a=q^{6s-1}, e_1=q^{-1}, f_1= q^{3s+3}, e_2=q^{3s+\frac{3}{2}}, f_2 =-q^{3s+\frac{3}{2}}, e_3= -q^{3s+3}, f_3= q^{3s-1+m}, M \to \frac{m+1}{3}-s$ in Andrews multi-series transformation \eqref{Andrews multi-series}, the right-hand side of \eqref{Theorem 4 eqn 2} can be rewritten as
\begin{align*}
    & [6s-1] \frac{(q^{-1-m},q^{-1+m};q^3)_s (q^{-1};q^3)_{2s}}{(q^{3-m},q^{3+m};q^3)_s (q^{-2};q^3)_{2s}} \frac{(q^{6s+2},-q^{-m};q^3)_{\frac{m+1}{3}-s}}{(-q^{3s-1}, q^{3s+3-m};q^3)_{\frac{m+1}{3}-s}}  \\
    &\quad \quad \times \sum_{k=0}^{(m+1)/3-s} \sum_{j=0}^{(m+1)/3-s-k} \left\{ \frac{(q^{3s};q^3)_k (-q^{-1};q^3)_j (q^{6s+3};q^6)_k}{(q^3;q^3)_k (q^3;q^3)_j (q^{6s+3},q^{3s-1};q^3)_k} \right. \notag \\ & \qquad \qquad \qquad \qquad \qquad \qquad \times \left. \frac{(-q^{3s+3},q^{3s-1+m}, q^{3s-1-m};q^3)_{k+j}}{(q^{6s+1};q^6)_{k+j} (-q^{3s+2}; q^3)_{k+j}}  \right\} (-q^{-1})^k q^{3(k+j)} \\
    & = [m+3s] \frac{(q^{-1};q^3)_{\frac{m+1}{3}+s} (q^{-1-m},q^{-1+m};q^3)_s (-q^{-m};q^3)_{\frac{m+1}{3}-s}}{(q^{3+m};q^3)_s (q^{3-m};q^3)_{\frac{m+1}{3}} (q^{-2};q^3)_{2s} (-q^{3s-1};q^3)_{\frac{m+1}{3}-s}} \\
    &\quad \quad \times \sum_{k=0}^{(m+1)/3-s} \sum_{j=0}^{(m+1)/3-s-k} \left\{ \frac{(q^{3s};q^3)_k (-q^{-1};q^3)_j (q^{6s+3};q^6)_k}{(q^3;q^3)_k (q^3;q^3)_j (q^{6s+3},q^{3s-1};q^3)_k} \right. \notag \\ & \qquad \qquad \qquad \qquad \qquad \qquad \times \left. \frac{(-q^{3s+3},q^{3s-1+m}, q^{3s-1-m};q^3)_{k+j}}{(q^{6s+1};q^6)_{k+j} (-q^{3s+2}; q^3)_{k+j}}  \right\} (-q^{-1})^k q^{3(k+j)} \\
    & = (-1)^{\frac{m+1}{3}-s} q^{-1-m(s+1) + 3s^2 - 2s} [m+3s] \frac{1+q^m}{1+q^{3s-1}} \frac{(q^m,q^{m-1},q^{m+4-3s};q^3)_s}{(q^{m+3};q^3)_s (q^{-2};q^3)_{2s}}  \\
&\quad \quad \times \sum_{k=0}^{(m+1)/3-s} \sum_{j=0}^{(m+1)/3-s-k} \left\{ \frac{(q^{3s};q^3)_k (-q^{-1};q^3)_j (q^{6s+3};q^6)_k}{(q^3;q^3)_k (q^3;q^3)_j (q^{6s+3},q^{3s-1};q^3)_k} \right. \notag \\ & \qquad \qquad \qquad \qquad \qquad \qquad \times \left. \frac{(-q^{3s+3},q^{3s-1+m}, q^{3s-1-m};q^3)_{k+j}}{(q^{6s+1};q^6)_{k+j} (-q^{3s+2}; q^3)_{k+j}}  \right\} (-q^{-1})^k q^{3(k+j)}.
\end{align*}
This shows that the above $q$-congruence \eqref{Theorem 4 eqn} is true modulo $1-aq^m$ and $a-q^m$. Since the polynomials $(1-aq^m)$ and $(a-q^m)$ are pairwise coprime. Thus, the $q$-congruence \eqref{Theorem 4 eqn} holds modulo $(1-aq^m)(a-q^m)$. This completes the proof.
\end{proof}
\begin{proof}[Proof of Theorem \ref{Main Thm 2}]
    For $ 0 \le s \le \frac{m-3}{3}$, it is easy to see that the right-hand side of \eqref{Theorem 4 eqn} is congruent to $0$ modulo $\Phi_m(q)$. Applying the Chinese remainder theorem for coprime polynomials, we obtain the $q$-congruence \eqref{Theorem 4 eqn} modulo $\Phi_m(q)(1-aq^m)(a-q^m)$. Hence, taking $a=1$ in this $q$-congruence yields the desired $q$-congruence \eqref{Main Thm 2 eqn}. This immediately completes the proof of Theorem \ref{Main Thm 2}.
\end{proof}

\section*{Declarations}
{\bf Funding:} No funding was received for this research.\\
{\bf Conflicts of Interest:} The author has no conflicts
of interest. \\
{\bf Data Availability Statement:} No datasets were generated or analyzed in this study. The results are obtained entirely through theoretical and mathematical methods.


\begin{thebibliography}{99}

\bibitem{AndrewsM}  G. E. Andrews, Problems and prospects for basic hypergeometric functions, in Theory and Application for Basic Hypergeometric Functions, Publication of the Mathematics Research Center, University of Wisconsin, Vol. $35$, Academic Press, New York, $1975$, pp. $191–224$.

\bibitem{gasper}
G. Gasper and M. Rahman, Basic hypergeometric series,
Encyclopedia of Mathematics and its Applications, vol. $96$,
2nd ed., Cambridge University Press, Cambridge, $2004$.

\bibitem{guo2022new}
V. J. W. Guo, A new extension of the (A.2) supercongruence
of Van Hamme, Results Math. \textbf{77} $(2022)$, no. $3$, Art. $96$.

\bibitem{Guo25}
V. J. W. Guo, A new $q$-variation of the (C.2) supercongruence
of Van Hamme, Results Math. \textbf{79} $(2024)$, Art. $87$, $13$ pp.

\bibitem{Guo-qB2C2}
V. J. W. Guo, A further common $q$-generalization of the
(B.2) and (C.2) supercongruences of Van Hamme,
Int. J. Number Theory \textbf{21} $(2025)$, $605-614$.

\bibitem{GuoGuo RM}  S. Guo and V.J.W. Guo, Curious $q$-analogues of two supercongrunces modulo the third power of a prime, Results Math. \textbf{76}, $37 ~(2021)$.

\bibitem{GuoLiu1}
V. J. W. Guo and J. C. Liu, Some congruences related to a congruence of Van Hamme, Integral Transforms Spec. Funct. \textbf{31} $(2020)$, no. $3, 221-231$.

\bibitem{GuoSchlsrLemma RM} V.J.W. Guo and M.J. Schlosser, A new family of $q$-supercongruences modulo the fourth power of a cyclotomic polynomial. Results Math. $75$, $155~ (2020)$.

\bibitem{GuoSclssr ConstApprox 21}
V. J. W. Guo and M. J. Schlosser, Some $q$-supercongruences from
transformation formulas for basic hypergeometric series,
Constr. Approx. \textbf{53} $(2021)$, $155-200$.


\bibitem{guo2019q}
V. J. W. Guo and W. Zudilin, A $q$-microscope for supercongruences,
Adv. Math. \textbf{346} $(2019)$, $329-358$.

\bibitem{JK RJ22} A. Jana and G. Kalita, Proof of some conjectural supercongruences of Guo and Schlosser, Ramanujan J. \textbf{54}, $649–658 ~(2021)$.

\bibitem{Liton2025 Ramanujan}
A. Jana and L. Karmakar, Generalization of some supercongruences
related to (B.2) and (C.2) supercongruences of Van Hamme,
Ramanujan J. \textbf{67} $(2025)$, Art. $54$, $13$ pp.

\bibitem{KJ 2026 RJ}
L. Karmakar and A. Jana, A common $q$-generalization of the
(F.2) and (G.2) supercongruences of Van Hamme,
Ramanujan J. \textbf{71} $(2026)$, Art. $27$, $10$ pp.

\bibitem{P2 Liton}
S.-X. Li, L. Karmakar, A. Jana, and S.-D. Wang,
Some $q$-supercongruences for sums involving certain
$q$-shifted factorials, submitted.


\bibitem{long1}
L. Long, Hypergeometric evaluation identities and supercongruences,
Pacific J. Math. \textbf{249} $(2011)$, no. 2, $405-418$.

\bibitem{mortenson}
E. Mortenson, A $p$-adic supercongruence conjecture of Van Hamme,
Proc. Amer. Math. Soc. \textbf{136} $(2008)$, no. $12, 4321-4328$.

\bibitem{wilf}
M. Petkov$\hat{s}$ek, H. S. Wilf, and D. Zeilberger, $A=B$,
A. K. Peters, Wellesley, MA, $1996$.

\bibitem{ramanujan}
S. Ramanujan, Modular equations and approximations to $\pi$,
Quart. J. Math. \textbf{45} $(1914)$, $350-372$.
Reprinted in Collected Papers of Srinivasa Ramanujan,
AMS Chelsea Publ., Providence, RI, $2000$, pp. $23-39$.

\bibitem{Sun2012} Z.-W. Sun, A refinement of a congruence result by van Hamme and Mortenson, Ill. J. Math. \textbf{56}, $967–979 ~(2012)$.

\bibitem{Tang G2}
N. Tang, New generalisations of Van Hamme's (G.2) supercongruence,
Bull. Aust. Math. Soc. \textbf{107} $(2023)$, $177-183$.

\bibitem{Tang2023 JMAA}
N. Tang, Some $q$-supercongruences related to Van Hamme's
(C.2) supercongruence, J. Math. Anal. Appl. \textbf{527} $(2023)$,
Art. $127402$.

\bibitem{van2020some}
L. Van Hamme, Some conjectures concerning partial sums of generalized
hypergeometric series, in $p$-adic Functional Analysis
(Nijmegen, $1996$), Lecture Notes in Pure and Appl. Math., vol. $192$,
Marcel Dekker, New York, $1997$, pp. $223-236$.


\bibitem{Wang2024 Results Math}
C. Wang, A new extension of Van Hamme's (E.2) supercongruence,
Results Math. \textbf{79} $(2024)$, Art. $97$.

\bibitem{Wei2024 RACSAM}
C. Wei, Generalizations of the (F.2) supercongruence of Van Hamme,
Rev. R. Acad. Cienc. Exactas Fis. Nat. Ser. A Mat.
\textbf{118} $(2024)$, Art. $33$.

\bibitem{zudilin}
W. Zudilin, Ramanujan-type supercongruences,
J. Number Theory \textbf{129} $(2009)$, no. $8, 1848-1857$.


\end{thebibliography}
\end{document}